\newtheorem{Theorem}{Theorem}
\newtheorem{Corollary}[Theorem]{Corollary}
\newtheorem{thm}{Theorem}[section]
\newtheorem{lem}[thm]{Lemma}
\newtheorem{conj}[thm]{Conjecture}
\newtheorem{rem}[thm]{Remark}
\newcommand{\cA}{\mathcal{A}}
\newcommand{\cC}{\mathcal{C}}
\newcommand{\cD}{\mathcal{D}}
\newcommand{\cO}{\mathcal{O}}
\newcommand{\bH}{\mathbb{H}}
\newcommand{\bZ}{\mathbb{Z}}
\newcommand{\bR}{\mathbb{R}}
\newcommand{\bC}{\mathbb{C}}
\newcommand{\bA}{\mathbb{A}}
\newcommand{\bD}{\mathbb{D}}
\newcommand{\bN}{\mathbb{N}}
\newcommand{\bP}{\mathbb{P}}
\newcommand{\bL}{\mathbb{L}}
\newcommand{\bQ}{\mathbb{Q}}
\newcommand{\fS}{\mathfrak{S}}
\newcommand{\vv}{\vec{v}}
\newcommand{\ww}{\vec{w}}
\newcommand{\pa}{\partial}
\newcommand{\eps}{\varepsilon}
\newcommand{\la}{\lambda}
\newcommand{\Lau}{\bC(\!(t)\!)}
\newcommand{\Puis}{\bC\langle\!\langle t\rangle\!\rangle}
\DeclareMathOperator{\na}{na}
\DeclareMathOperator{\an}{an}
\DeclareMathOperator{\pr}{pr}
\DeclareMathOperator{\PGL}{PGL}
\DeclareMathOperator{\RPer}{RPer}
\DeclareMathOperator{\rat}{Rat}
\DeclareMathOperator{\brat}{\overline{Rat}}
\DeclareMathOperator{\modu}{\mathcal{M}}
\DeclareMathOperator{\bmodu}{\overline{\mathcal{M}}}
\begin{document}

\title{Blow-up of multipliers in meromorphic families of rational maps}

\author{Charles Favre}
\address{CNRS - Centre de Math\'ematiques Laurent Schwartz, 
	\'Ecole Polytechnique, 
	91128 Palaiseau Cedex, France}
\email{\href{charles.favre@polytechnique.edu}{charles.favre@polytechnique.edu}}

\date{\today}

\maketitle

\begin{abstract}
We study the blow-up of the multipliers of periodic cycles in one-parameter holomorphic degenerating families of 
rational maps of the Riemann sphere. 
\end{abstract}

\tableofcontents

\section*{Introduction}
The space $\rat_d$ of complex rational maps of degree $d\ge2$ 
is naturally a Zariski open subset of a projective variety $\brat_d$
which encodes all rational maps of degree $\le d$ and is isomorphic to
a projective space of dimension $2d+1$. 

The group of Möbius transformations $\PGL(2,\bC)$ acts on $\rat_d$ by conjugacy, and
Silverman proved that the quotient space $\modu_d= \rat_d/\PGL(2,\bC)$
is an affine space of dimension $2d-2$ having only quotient singularities, see~\cite{silverman}, and~\cite{BFN} for a thorough topological investigation 
of this space. It is also a rational variety
by a theorem of Levy~\cite{levy}.
Using GIT techniques, one can construct a natural projective compactification $\bmodu_d$ of $\modu_d$. 
Points at infinity in this compactification have no clear dynamical interpretation, and several attempts
have been done to find more adapted compactifications of $\modu_d$, see~\cite{demarco05,demarco07,schmitt}, and~\cite{fuji-tani,demarco-mcmullen}
for the analogous problem for the moduli space of polynomials.

\medskip

A meromorphic  family $\{f_t\}$ of rational maps of degree $d$ 
is by definition a holomorphic map $f\colon \bD \to \brat_d$ such that $f(\bD^*) \subset \rat_d$. 
It is said to be degenerating if moreover,
the induced class map $[f]\colon \bD\to \bmodu_d$ satisfies
$[f(0)]\notin\modu_d$.

In this note we investigate the behaviour of the multipliers of periodic orbits of $f_t$
as $t\to0$ in a general degenerating family. 
Our first theorem complements~\cite[Théorème~4.4]{FRLend}, and
can be stated as follows.
\begin{Theorem}\label{thm:main1}
Let $\{f_t\}$ be a degenerating family of rational maps of degree $d\ge2$.
Then we are in one of the following (exclusive) situations. 
\begin{enumerate}
\item
There exists a constant $C>0$ such that 
for any $|t|\le \frac12$ and for any $z\in\bP^1(\bC)$
such that $f_t^n(z)=z$, we have
\[
|df^n_t(z)|^{1/n} \le C
~.\]
\item
There exists $\la>0$ such that for any $\eps>0$ and for all $n\in\bN^*$, one can find a constant $C=C(\eps,n)>0$ for which 
\[
\frac1{d^n}
\# \left\{z\in\bP^1(\bC), \, f^n_t(z)=z, \text{ and } \max\{1,|df^n_t(z)|\}^{1/n}\ge C |t|^{-\la}\right\}
\ge 1-\eps
\]
for all $|t|\le \frac12$. 
\end{enumerate}
\end{Theorem}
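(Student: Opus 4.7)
The plan is to pass to the non-archimedean framework over the field of complex Puiseux series $K = \Puis$, equipped with the $t$-adic absolute value normalized so that $|t|<1$. The family $\{f_t\}$ then defines a single rational map $F \in K(z)$ of degree $d$, which induces a dynamical system on the Berkovich projective line $\bP^{1,\an}_K$. The degeneration hypothesis $[f(0)] \notin \modu_d$ translates into the statement that no $K$-conjugate of $F$ has good reduction modulo $t$; equivalently, the class of $F$ in $\bmodu_d(K)$ lies outside $\modu_d$ even after passage to a finite extension.

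My main tool will be the equidistribution of periodic points of $F$: the normalized counting measures $\frac{1}{d^n}\sum_{F^n(P)=P}\delta_P$ converge weakly on $\bP^{1,\an}_K$ to the unique measure of maximal entropy $\mu_F$. For each classical periodic point $P \in \bP^1(\bar K)$ of period $n$, the non-archimedean multiplier can be written $|dF^n(P)|_K = |t|^{-\alpha_n(P)}$ with $\alpha_n(P) \in \bQ$, and via Puiseux expansion $P$ gives a branch of complex periodic points $z_t$ of $f_t^n$ satisfying $|df_t^n(z_t)| = |t|^{-\alpha_n(P)}(c_P+o(1))$ as $t\to 0$. The non-archimedean Lyapunov integral $L(F) := \int \log|F'|_K\,d\mu_F$, taken with respect to the chordal metric, will govern the dichotomy.

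I would then argue as follows. If $L(F)=0$, drawing on the structural results of \cite{FRLend} and Rivera-Letelier's description of the Berkovich Fatou components, one should extract uniform bounds $\alpha_n(P) \le 0$ together with leading constants $c_P \le C_0^n$, yielding $|df_t^n(z)|^{1/n} \le C$ for all $t$ in a punctured neighbourhood of $0$; a compactness argument in the range $\delta \le |t|\le \tfrac12$ then handles bounded $|t|$, giving Case~(1). If $L(F)>0$, write $L(F) = \la_0 \log|t|^{-1}$. Equidistribution combined with a truncation argument controlling the upper tail of the $\alpha_n$ distribution yields: for any $\la < \la_0$ and any $\eps>0$, at least $(1-\eps)d^n$ periodic points of $F^n$ satisfy $\alpha_n(P)\ge \la n$ for $n$ large. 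Translating back via Puiseux expansion then produces Case~(2).

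The principal obstacle is the rigidity statement underlying Case~(1): namely that when $L(F)=0$ \emph{all} non-archimedean multipliers $\alpha_n(P)$, not merely those of periodic points that equidistribute with positive weight, are non-expanding. In non-archimedean dynamics such a statement typically demands a classification of the Julia set in the absence of repelling classical periodic cycles, potentially via a DeMarco-style rescaling analysis or a barycentric decomposition of $\mu_F$ describing the degeneration geometrically. A secondary but genuine difficulty is ensuring uniformity of the implicit constants both across $n$, so that the $n$th-root bound in Case~(1) is a true constant, and across the full disk $|t|\le \tfrac12$; this last point should follow from compactness together with a quantitative form of the equidistribution theorem.
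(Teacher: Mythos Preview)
Your architecture matches the paper's: pass to the non-archimedean map $f_{\na}$ over $\Lau$ and split on whether $\la(f_{\na})$ vanishes. For Case~(1) both you and the paper invoke \cite[Th\'eor\`eme~4.4]{FRLend} directly, which already contains the uniformity in $n$ and over $|t|\le\tfrac12$; the compactness patch you worry about is not needed.

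The gap is in Case~(2). Equidistribution of periodic points toward $\mu_{f_{\na}}$, together with convergence of the averaged exponent $\tfrac{1}{d^n}\sum_P \alpha_n(P)/n\to\la(f_{\na})$ and a uniform upper bound $M$ on $\alpha_n(P)/n$, yields via a Markov-type inequality only that the proportion of $n$-periodic points with $\alpha_n(P)/n<\la$ is at most $(M-\la(f_{\na}))/(M-\la)+o(1)$: a fixed constant in $(0,1)$, not an arbitrary $\eps$. This is precisely the ``positive proportion'' statement already in \cite{FRLend} that the present theorem is meant to \emph{sharpen}. To drive the bad proportion below every $\eps$ you would need the upper tail of $\{\alpha_n(P)/n\}$ to concentrate near $\la(f_{\na})$ as well, i.e.\ a two-sided large-deviations estimate, and you give no mechanism for it. The paper bypasses this entirely: its Theorem~\ref{thm:main5} \emph{constructs} $(1-\eps)d^n$ rigid $n$-periodic points with $|df_{\na}^n(z)|\ge A\,e^{n\la(f_{\na})/2}$ by a Briend--Duval argument on the natural extension $(\hat J,\hat\mu)$. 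Contracting inverse branches of rate $e^{-n\la(f_{\na})/2}$ exist along $\hat\mu$-almost every backward orbit; mixing of $\hat\mu$ forces many such branch images to land strictly inside any fixed small ball of positive mass; and Schwarz's lemma then yields in each a periodic point with multiplier at least the reciprocal of the contraction. The transfer back to the complex family is the bookkeeping Lemma~\ref{lem:multi-na} on irreducible components of $\{f_t^n(z)=z\}$, which matches your Puiseux-expansion step.
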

In other words, we have the following strong dichotomy: either all multipliers are  bounded from above uniformly 
over the period $n$ and over the parameter $t$; 
or the multiplier of most periodic orbits blow-up when $t\to 0$.
In~\cite[Théorème~4.4]{FRLend} it was only proven that a positive proportion of multipliers blows up in the second case. 

In the case $f_t$ is a family of polynomials which is is stable over $\bD^*$, 
and its non-Archimedean limit has no recurrent critical points, then using results of Trucco~\cite{trucco}
we are able to obtain a uniform version of (2), see Theorem~\ref{thm:stable-uniform} below.

C. Gong~\cite{gong} has partially generalized the previous theorem to sequences of rational maps.

\medskip

We observe that in some special families, then only Case (2) can appear. 
In the case of polynomials, it follows from the work of DeMarco and McMullen~\cite{demarco-mcmullen}. 
For quadratic rational maps, it is due to Milnor~\cite{milnor-quadra} who proved that
a sequence in $\modu_2$ degenerates iff the modulus of the multiplier at a fixed point
diverges to $\infty$.
It is also the case for cubic rational maps by~\cite[Corollaire~4.5]{FRLend}.
In all these families, we prove that one can actually find a periodic point of period $\le3$
whose multiplier has a pole at $t=0$.

Let us say that a family of rational maps $\{f_t\}$ admits a fixed point
whose multiplier blows-up if there exists an irreducible component 
$C$ of the curve $\{(z,t),\ f_t(z)=z\}$ such that 
the multiplier function $(z,t)\in C\mapsto df_t(z)$ has at least one pole
over $t=0$.

\begin{Theorem}\label{thm:main2}
Let $\{f_t\}$ be any degenerating family of rational maps of degree $d\ge2$.
\begin{enumerate}
\item
If $\{f_t\}$ is a family of quadratic rational maps or of cubic polynomials, 
then there exists at least one fixed point
whose multiplier blows-up.
\item
If $\{f_t\}$ is a family of polynomials of any degree, then there exists at least one point of period $\le 2$
whose multiplier blows-up.
\item
If $\{f_t\}$ is a family of cubic rational maps, then there exists at least one point of period $\le 3$
whose multiplier blows-up.
\end{enumerate}
\end{Theorem}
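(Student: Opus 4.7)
The common strategy is to exploit, in each listed family, an explicit set of coordinates on the relevant moduli space given by symmetric functions of multipliers at low-period cycles, and to translate the degeneracy of \([f_t]\) at \(t=0\) into the blow-up of one of these coordinates. Since an elementary symmetric function cannot escape to infinity without at least one of its variables doing so, this will force a concrete low-period multiplier to have a pole along some irreducible component of the corresponding periodic-point curve in \(\bD\times\bP^1\), which is exactly the condition in the statement.

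For the quadratic-rational case of part (1), the plan is to use Milnor's identification \(\modu_2\cong\bC^2\) via the first two elementary symmetric functions \((\sigma_1,\sigma_2)\) of the three fixed-point multipliers \(\lambda_1,\lambda_2,\lambda_3\), recalling that the Fatou--Julia fixed-point formula \(\lambda_1+\lambda_2+\lambda_3-\lambda_1\lambda_2\lambda_3=2\) determines \(\sigma_3\) from \(\sigma_1\). Since \(\{f_t\}\) is degenerating, \((\sigma_1(t),\sigma_2(t))\) leaves every compact subset of \(\bC^2\) as \(t\to 0\), so some \(\lambda_i(t)\), viewed as a branch of a multi-valued function on the disk, must blow up at \(t=0\); reading this along the corresponding irreducible branch of \(\{(z,t):f_t(z)=z\}\) produces a fixed point whose multiplier has a pole over \(t=0\). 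The cubic-polynomial case of part (1) is analogous: the moduli space of cubic polynomials is parametrized (up to a finite cover) by two independent symmetric functions of the multipliers of the three finite fixed points, subject to the relation \(\sum_{i=1}^3 1/(1-\lambda_i)=0\) coming from the fact that infinity is superattracting.

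For parts (2) and (3) the same strategy should work once the multiplier map is enlarged to include period \(2\) (resp.\ period \(3\)) cycles: one studies the map from moduli space to the product of spaces of symmetric functions of multipliers of all cycles of period \(\le N\), with \(N=2\) in the polynomial case and \(N=3\) for cubic rational maps. The key technical step is to show that this enlarged map is proper on a suitable compactification of moduli, so that a degenerate family forces the target coordinates to leave every compact set and hence some period-\(\le N\) multiplier to blow up. For polynomials, this properness should follow from the Branner--Hubbard / DeMarco--McMullen description of the boundary of the polynomial moduli space; for cubic rational maps, one combines \cite[Corollaire~4.5]{FRLend} (which already produces multiplier blow-up somewhere) with an explicit analysis of period \(\le 3\) cycles. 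The main obstacle I anticipate, in both extensions, is ruling out ``compensating'' degenerations in which several multipliers diverge along a subsequence while their elementary symmetric functions remain bounded; this is automatic in (1) from Milnor's isomorphism, but for (2) and (3) it requires genuine input from the algebraic geometry of the corresponding moduli spaces.
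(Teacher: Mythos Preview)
Your treatment of the quadratic rational case in (1) matches the paper's: both invoke Milnor's proper isomorphism $\modu_2\to\bC^2$ via symmetric functions of fixed-point multipliers, so that degeneracy forces some $\sigma_i$, hence some $\lambda_i$, to blow up.

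For the remaining cases, however, your plan inverts the paper's logic in a way that leaves a genuine gap. You propose to deduce Theorem~\ref{thm:main2} from the properness of the enlarged multiplier map $\Lambda=(\Lambda_1,\dots,\Lambda_N)$ on the relevant moduli space, but you offer no mechanism for establishing that properness beyond the hope that it follows from ``the Branner--Hubbard / DeMarco--McMullen description'' or from ``an explicit analysis of period $\le 3$ cycles.'' You yourself flag the obstacle---ruling out compensating degenerations in which the class $[f_t]$ escapes while all symmetric functions of low-period multipliers stay bounded---and nothing in your sketch addresses it. Invoking \cite[Corollaire~4.5]{FRLend} does not help here: it tells you that in a degenerating cubic family \emph{some} multiplier blows up, but gives no control on the period.

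The paper proceeds in the opposite direction and by a different method. It first translates ``some period-$\le N$ multiplier blows up'' into the purely non-Archimedean statement that the associated map $f_{\na}$ over $\Lau$ has a rigid repelling periodic point of period $\le N$ (via Lemma~\ref{lem:multi-na} and the identification of branches of $C_n$ over $t=0$ with Puiseux-series periodic points). Using that degeneracy is equivalent to $f_{\na}$ not having potential good reduction (Theorem~\ref{thm:pot-good}), it then proves directly---by a hands-on combinatorial analysis of the dynamics on the Berkovich tree---that any tame polynomial (resp.\ cubic rational map) without potential good reduction has a rigid repelling cycle of period $\le 2$ (resp.\ $\le 3$); these are Theorems~\ref{thm:period2} and~\ref{thm:period3}. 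The properness of $\Lambda$ (Corollary~\ref{cor:embed}) is then \emph{deduced from} Theorem~\ref{thm:main2}, not used to prove it. So the non-Archimedean construction of a low-period repelling cycle is not an alternative route but the actual engine of the argument, and it is precisely what your proposal is missing.
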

Statement (1) is elementary, and 
the statement (2) was communicated to me by V.~Hu\-guin, see~\cite{huguin1,huguin2}. 
There are families of maps of any degree $d\ge4$ for which 
all multipliers of all periodic cycle stay bounded (Lattès examples in degrees that are square integers; 
or the McMullen family $z^d+ \frac{t}{z^d}$, see~\cite{mcmullen,DLU05,QWY12}). Also Luo has studied
hyperbolic components in $\modu_d$ for which there exists a degenerating sequence
whose mulitpliers are all bounded~\cite{luo2}. 

\medskip

Pick any rational map $f$ of degree $d\ge2$. 
By convention, a (resp. formal) $n$-cycle is a 
finite set of cardinality $n$ which is $f$-invariant
and over which the dynamics induced by $f$ is transitive (resp. 
is an  $m$ cycle with $m|n$ and the multiplier is a $n/m$-th primitive root of unity).
Let $\cD_n$ be the set of formal $n$-cycles of $f$ counted with multiplicity. 
Then $d_n:=\#\cD_n$
 only depends on $d$, see~\cite[Chapter 4.5]{silverman}. For small values of $n$ we have
$d_1=d+1$, $d_2=\frac12(d^2-d)$, and $d_3=\frac13(d^3-d)$. 
Define $\Lambda_n(f)$ as the collection of 
multipliers $\{df^n(z_i)\}$ where $z_i$ is an element in each cycle 
in $\cD_n$, and 
the collection $\{df^n(z_i)\}$ is viewed as an element of $\bC^{d_n}$ up to the 
action by the permutation group $\fS_{d_n}$.
An alternative way is to consider the symmetric functions of the multipliers of all cycles in $\cD_n$. 
In this way, we get a map  
$\Lambda_n\colon \rat_d\to \bA_\bC^{d_n}/\fS_{d_n}\simeq\bA^{d_n}_\bC$
which is algebraic, and descends to the moduli space
$\Lambda_n\colon \modu_d\to \bA^{d_n}_\bC$.

We can also consider the space of complex polynomials of degree $d\ge2$ modulo 
conjugacy by affine transformations. This is an affine space $\modu^{\mathrm{pol}}_d$ of dimension 
$d-1$ (see, e.g.,~\cite[Chapter~3]{favre-gauthier}). The number of formal $n$-cycles disjoint from 
$\infty$ is again given by $d_n$ if $n\ge2$, but
the number of fixed points different from $\infty$
is equal to $d'_1=d$. 
We construct as above maps
$\Lambda'_1\colon\modu^{\mathrm{pol}}_d\to\bA^{d}_\bC$, and 
$\Lambda_n\colon\modu^{\mathrm{pol}}_d\to\bA^{d_n}_\bC$ for any $n\ge2$. 

A result by Milnor implies that $\Lambda_2\colon\modu_2\to \bA^3_\bC$
is a proper embedding and its image is an affine plane, see~\cite{milnor-quadra}.
Also the map $\Lambda'_1\colon \modu^{\mathrm{pol}}_3\to\bA^3_\bC$ 
maps onto a plane $\Pi$, and $\Lambda'_1\colon   \modu^{\mathrm{pol}}_3\to\Pi$
is a bijective proper map, see~\cite{milnor-quadra,nishi-fuji}.

The preceding result has the following interesting consequence.
\begin{Corollary}\label{cor:embed}
\begin{enumerate}
\item 
the map $\Lambda\colon \modu^{\mathrm{pol}}_d\to\bA_\bC^d\times\bA_\bC^{\frac12(d^2-d)}$ 
given by $\Lambda=(\Lambda'_1,\Lambda_2)$
is a proper birational map;
\item
the map $\Lambda\colon\modu_3\to\bA^4_\bC\times\bA_\bC^3\times\bA_\bC^8$
given by $\Lambda=(\Lambda_1,\Lambda_2,\Lambda_3)$ is a proper birational map.
\end{enumerate}
\end{Corollary}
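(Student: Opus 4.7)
The plan is to split the proof into (a) properness of $\Lambda$ and (b) generic injectivity onto its image; together these express that $\Lambda$ is proper and induces a birational equivalence with its (necessarily closed) image in the target affine space.

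For (a), since $\Lambda$ is a morphism between complex affine varieties, properness is equivalent to finiteness, and by the valuative criterion it suffices to check that for every holomorphic arc $t\mapsto [f_t]$ into $\modu^{\mathrm{pol}}_d$ (resp.~$\modu_3$) whose lift to $\rat_d$ is a degenerating family in the sense of the introduction, the composition $t\mapsto \Lambda([f_t])$ also fails to extend holomorphically across $t=0$. This is precisely the content of Theorem~\ref{thm:main2}(2) (resp.~(3)): such a family admits a periodic point of period $\le 2$ (resp.~$\le 3$) whose multiplier has a pole at $t=0$, and since $\Lambda$ records the elementary symmetric functions of these multipliers, at least one coordinate of $\Lambda([f_t])$ must diverge as $t\to 0$.

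For (b), properness makes the image of $\Lambda$ a closed subvariety of the target and $\Lambda$ finite onto it, so birationality reduces to showing that $\Lambda$ has generic degree $1$. I would argue this by a tangent-space computation at a carefully chosen base point $[f_0]$: if the differential $d\Lambda_{[f_0]}$ has rank $\dim\modu^{\mathrm{pol}}_d = d-1$ (resp.~$\dim\modu_3 = 4$), then $\Lambda$ is unramified at $[f_0]$, hence étale on a Zariski-open neighborhood, and a single one-point fibre check pins the degree to $1$. For polynomials of low degree the injectivity is classical (cf.~Milnor for $d\le 3$), and the general polynomial case is treated by Huguin in the cited references; for cubic rational maps one may take $[f_0]$ hyperbolic with all periodic points of period $\le 3$ simple and with pairwise distinct multipliers.

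\textbf{The main obstacle} is (b): properness comes essentially for free from Theorem~\ref{thm:main2}, but since multiplier maps are known to have nontrivial fibres in some regimes (e.g., Lattès-type ambiguities in higher degree), the tangent-space computation must be set up at an explicit model—centered monic polynomials $z^d+a_{d-2}z^{d-2}+\cdots+a_0$, or a suitable normal form for cubic rational maps—and carried out by differentiating symmetric functions of multipliers with respect to the deformation parameters, at which point the algebraic bookkeeping becomes the most delicate part of the argument.
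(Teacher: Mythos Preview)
Your properness argument (a) is essentially the paper's own: the paper compactifies $\rat_3$ (resp.\ $\modu^{\mathrm{pol}}_d$), resolves the indeterminacy of $\Lambda$ to get a regular map from a projective model, and argues by contradiction that the boundary cannot map into the affine target---for if it did, a holomorphic arc through such a boundary point would give (after a base change $t\mapsto t^q$ to lift from $\modu_d$ to $\rat_d$) a degenerating family with bounded multipliers at period $\le 3$ (resp.\ $\le 2$), contradicting Theorem~\ref{thm:main2}. Your direct appeal to the valuative criterion is the same idea with the compactification step left implicit; just be explicit that an arc into $\modu_d$ lifts to an honest family in $\rat_d$ only after a finite base change.

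Where you diverge from the paper is in (b). The paper does \emph{not} prove birationality at all: it simply cites Hutz--Tepper and Huguin for the polynomial case and Gotou for cubic rational maps, and states explicitly that its own contribution is the properness. So your tangent-space/étale-at-a-point strategy is not a comparison target here---it is an independent proposal going beyond what the paper attempts. As a strategy it is reasonable in outline (unramified at one point plus a one-point fibre forces generic degree $1$), but you have correctly identified it as the delicate part, and you should be aware that carrying it out for general $d$ in the polynomial case is nontrivial (this is the content of Huguin's work), and that for $\modu_3$ the normal-form algebra is genuinely involved. If your goal is to match the paper, you can drop the sketch of (b) entirely and cite the same references; if your goal is an independent proof, the tangent computation must be executed in full, not merely outlined.
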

The fact that the map  is birational is proved in~\cite{HT}  in Case (1) and $d\le 5$, and 
in~\cite{gotou} in Case (2). Case (1) is proved in any degree in~\cite{huguin2}. 

Our main contribution in Case (1) is to give an independent proof of Huguin about the properness, and in Case (2) 
to show that $\Lambda$  is proper.
We dont know if the images of the maps above are normal. If it were the case, then we could conclude that $\Lambda$
is a proper embedding, but it is unlikely to be the case.
Note that since the multiplier maps are no longer proper on $\modu_d$
when $d\ge4$ so cannot  define proper embeddings. In a private communication, J.~Xie
has informed me that the multiplier maps are not even proper on their image.

\medskip
The proof of Theorem~A runs as follows. 
Following Kiwi~\cite{kiwi-cubic,kiwi}, we attach to our meromorphic family $f_t$ a natural
rational map $f_{\na}$ defined over the field of Laurent series $\Lau$. 
The latter field is complete for the $t$-adic norm $|\cdot|_t$, and
we analyze the dynamics of $f_{\na}$ over the Berkovich projective line
$\bP^{1,\an}_{\Lau}$. 

Recall first that one can define a natural ergodic measure $\mu_{f_{\na}}$
on $\bP^{1,\an}_{\Lau}$ (see, e.g.,~\cite{baker-rumely,theorie-ergo,benedetto-book}) that integrates 
the function $\log|df_{\na}|$. We may thus define the Lyapunov exponent of $f_{\na}$
by setting $\la(f_{\na}):=\int \log|df_{\na}| d\mu_{f_{\na}}$. 
Since the residual characteristic of $\Lau$ is $0$, it follows that
$\la(f_{\na})\ge0$, see~\cite{okuyama-approx-lyap,FRLend}.
When $\la(f_{\na})=0$, then we have a uniform bound on the multipliers by~\cite{FRLend}.


When $\la(f_{\na})>0$, one shall prove (Theorem~\ref{thm:main5}) that in a suitable sense almost every 
rigid periodic cycle for $f_{\na}$ are repelling with non-Archimedean multiplier $\ge \la(f_{\na})/2$, 
which implies Theorem~\ref{thm:main1}. 

\medskip

Let us now explain how we prove Theorem~\ref{thm:main2}. It is in fact a purely non-Archimedean statement
(see Theorems~\ref{thm:period2} and~\ref{thm:period3} for details). One needs to show that any polynomial (resp. any cubic rational map) over a metrized field of residual characteristic
$0$ has at least one repelling rigid point of period $\le 2$ (resp. $\le 3$)  except if it has potential good reduction. 
For polynomials, we use a counting argument. V. Huguin has given two other independent proofs of this fact, see~\cite{huguin2}. 
For cubic rational maps,  we initiate the investigation of the dynamics of a cubic rational map having at least one type II
fixed point of local degree $\ge 2$, in the vein of the fundamental work of Kiwi in the quadratic case~\cite{kiwi}. 
We hope that our work will lead to further developments in the classification of non-Archimedean cubic maps. 

\begin{conj}
For any $d\ge2$ there exists an integer $N(d)\ge1$ such that the following holds. 
Any rational map $f$ of degree $d$ defined over 
a field of residual characteristic $0$ 
with $\la(f)>0$ 
admits a rigid repelling periodic point of period $\le N(d)$. 
\end{conj}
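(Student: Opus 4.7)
The plan is to combine the thermodynamic input $\la(f)>0$ with the combinatorial structure of the minimal $f$-invariant subtree $T_f \subset \bP^{1,\an}_K$ spanned by the periodic rigid and type II points. By Theorem~\ref{thm:main1} (in its non-Archimedean incarnation announced in the sketch), the hypothesis $\la(f)>0$ forces a definite proportion of rigid periodic points of every given period to be repelling with multiplier bounded away from $0$; equivalently, the equilibrium measure $\mu_f$ is supported on the Berkovich Julia set and integrates $\log|df|$ with positive value. In particular $T_f$ is non-trivial, contains at least one edge, and carries an $f$-invariant Radon measure which is not concentrated at a single vertex.

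The next step is to extract a type II periodic point $P \in T_f$, of period $\le N_1(d)$, at which the reduction $\overline{f^{\mathrm{per}(P)}}$ has degree $\ge 2$ over the residue field. Granting such a $P$, the reduction is a rational map of degree in $[2,d]$ over a residue field of characteristic $0$, and by induction on $d$ it admits a repelling rigid periodic point of period $\le N_2(d)$; Hensel's lemma then lifts this to a repelling rigid periodic point of $f$ of period $\le N_1(d) \cdot N_2(d)$, closing the induction with $N(d) := \max_{d' \le d} N_1(d') N_2(d')$. The base cases $d=2,3$ are covered by Theorem~\ref{thm:main2}.

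The main obstacle is producing $N_1(d)$: a uniform bound, in terms of $d$ alone, on the period of an expanding type II vertex. Two natural routes suggest themselves. First, a degeneration argument inside $\modu_d(K)$: the locus of $f$ with $\la(f)>0$ but no periodic type II point of local degree $\ge 2$ and period $\le N$ is constructible, and an induction on the combinatorial complexity of the reduction tree -- in the spirit of Kiwi's quadratic analysis and the cubic analysis carried out in the present paper -- should show this locus becomes empty once $N$ is large. Second, a direct counting approach: positive $\la(f)$ should force exponential growth of the number of expanding type II periodic vertices, whence pigeonhole yields one of small period. Either route ultimately rests on substantial new combinatorial input about $f$-invariant subtrees of $\bP^{1,\an}_K$, which at present is available only for $d \le 3$; a full proof may alternatively emerge from a softer argument that bypasses this classification by showing directly that $\mu_f$ charges the vicinity of some small-period type II vertex, in the spirit of Archimedean laminarity.
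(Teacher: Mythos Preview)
The statement is posed in the paper as an open \emph{conjecture}; the paper does not prove it, and records only the cases $N(2)=1$ (Milnor) and $N(3)=3$ (Theorem~\ref{thm:period3}). So there is no proof in the paper to compare against, and the question is whether your sketch constitutes a genuine strategy.

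It does not, and the failure is not only at the step you flag. Your inductive architecture is broken at the Hensel step. Suppose $P$ is a type~II periodic point of period $p$ with $\deg_P(f^p)\ge 2$, and $\tilde g$ is the reduction of $f^p$ at $P$. Then $\tilde g$ is a rational map over the \emph{residue field}, which for $\Lau$ (or any complete field of residual characteristic $0$ in the intended sense) is an algebraically closed field carrying the trivial absolute value. On such a field every rational map has Lyapunov exponent $0$ and admits no repelling rigid periodic points in the non-Archimedean sense: all multipliers have norm $1$. So the inductive hypothesis ``$\la>0$'' is never satisfied by $\tilde g$, and the induction on $d$ cannot even be invoked. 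Worse, even if you locate a periodic point of $\tilde g$ that is repelling in the \emph{Archimedean} sense (over $\bC$), Hensel's lemma lifts it to a rigid periodic point $z$ of $f^p$ lying in a direction at $P$ that is fixed by $Tf^p(P)$, and such a point necessarily satisfies $|df^{pq}(z)|\le 1$: it is \emph{indifferent} or attracting for $f$, never repelling. Repelling rigid periodic points are precisely those that are \emph{not} visible in any single reduction; they arise from open balls mapped strictly onto larger balls, which is exactly the mechanism exploited in the paper's proofs for $d\le 3$.

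Beyond this, you correctly identify that bounding $N_1(d)$ is where all the content lies, and then offer two heuristics (constructibility in $\modu_d$, exponential growth of expanding type~II vertices) neither of which is developed. The honest summary is that your proposal reduces the conjecture to itself: producing a small-period expanding type~II point with a controlled combinatorial neighbourhood is essentially what the paper does by hand for $d=3$, and is the entire difficulty in general.
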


Milnor proved $N(2)=1$, and we prove $N(3)=3$.

\medskip

The study of the behaviour of Lyapunov exponents in degenerating families was initiated by 
Faber and Demarco in dimension $1$, see~\cite{demarco-faber1,demarco-faber2}, 
and then further pursued in any dimension by the author~\cite{favre-degeneration}.
Powerful Berkovich techniques have been developed which even apply to 
sequences of maps~\cite{favre-gong}, or to families defined over a general Banach ring~\cite{poineau22}.

The behaviour of multipliers of periodic points in degenerating families of polynomials
was explored in~\cite{demarco-mcmullen}, and in the rational case lies at the root of the recent works of Luo~\cite{luo1,luo2}.

Finally the map associating to a rational map $f$ the multipliers at its periodic cycle
has been the focus of a lot of attention recently. 
A major advance was the proof by Z. Ji and J. Xie of the birationality of this map using a mixture of complex and non-Archimedean techniques~\cite{Ji-Xie,Ji-Xie23}.
We refer to these papers for more references on this map.

\medskip

\noindent {\bf Acknowledgements.}
I would like to thank both Chen Gong and Valentin Huguin for interesting discussions and  comments on a first version of this paper, that allowed me to correct
several inaccuracies.


\section{Degenerating families and non-Archimedean dynamics}

\subsection{Dynamics over $\Lau$}\label{sec:bas}
We refer to~\cite{benedetto-book,baker-rumely} for details. 
Recall that $\Lau$ endowed with the $t$-adic norm
\[\left|\sum_{n\in\bZ} a_nt^n\right|=e^{-\min\{n,\ a_n\neq0\}}\]
is a complete metrized non-Archimedean field. Its algebraic closure $\Puis$ is the field of Puiseux series
which carries a unique norm extending $|\cdot|$. It is not complete, but its completion $\bL$
is both algebraically closed and complete. 

The Berkovich projective line $\bP^{1,\an}_{\bL}$ is the union of the space of multiplicative semi-norms on 
$\bL[z]$ and a point $\infty$ endowed with the unique compact topology rendering all evaluation maps
$x \mapsto |P(x)|\in[0,+\infty]$ continuous. It is a compact $\bR$-tree. 

A type II point is a point determined by a norm of the form 
$P \mapsto \sup_{|z-z_0|\le r}|P(z)|$ where $z_0\in\bL$ and $r\in |\bL^*|=e^\bQ$. 
We denote it by $\zeta(z_0,r)$, and we write $x_g= \zeta(0,1)$ for the Gauss point (it is called like this since
the associated norm is the Gauss norm on $\bL[z]$).

Rigid points in  $\bP^{1,\an}_{\bL}$ are defined as 
the semi-norms having non trivial kernel (plus $\infty$) and are in bijection with $\bP^1(\bL)$. 
The complement of the set of all rigid points is denoted by $\bH_\bL$. 

It carries a canonical distance for which 
$\bH_\bL$ becomes a complete metric $\bR$-tree.

The Berkovich projective line $\bP^{1,\an}_{\Lau}$ is obtained as the quotient of  $\bP^{1,\an}_{\bL}$ by the action of 
the absolute Galois group $G$ of $\Lau$. It is again a compact $\bR$-tree, and rigid points are this time in bijection with
the set $\bP^1(\Puis)$ modulo the action of $G$. The $G$-action is isometric on $\bH_\bL$, and its quotient is 
the metric $\bR$-tree $\bH_{\Lau}$. 

\medskip

Any rational map $f$  of degree $d$ with coefficients in $\Lau$ defines a continuous finite $d$-to-$1$ map on  
$\bP^{1,\an}_{\Lau}$ (and on  $\bP^{1,\an}_{\bL}$).
This map preserves $\bH_{\Lau}$ (resp. $\bH_{\bL}$) and the set of rigid points.
 The group $\PGL(2,\bL)$  acts by isometries on $\bH_\bL$, and transitively both on rigid points and on type II points. 

Any rigid  point $z\in \bP^1(\bL)$ which is fixed by $f$ is in fact defined over $\Puis$, hence is rigid in $\bP^1_\Lau$. 
Let $\mu$ be the multiplier of $f$ at $z$. 
When $|\mu|>1$ (resp. $=1$, or $<1$) then we say that $z$ is a rigid repelling (resp. indifferent, or attracting) fixed point. 

Since $f$ is a finite map, one can define for any point $x\in\bP^{1,\an}_{\bL}$ the local degree $\deg_x(f)\in\{1,\cdots, d\}$.
On the set of rigid points, $\deg_x(f)=1$ if $x$ is not a critical point, and equal to $1$ plus the multiplicity as a critical point
if it is critical.

A rational map is said to have good reduction if the Gauss point is totally invariant. This means that $f(x_g)=x_g$ and 
$\deg_{x_g}(f)=d$. When it is the case, then for any representation $f=[P:Q]$ where
the maximum of the norm of the coefficients of $P$ and $Q$ is equal to $1$, then 
the complex rational map $\tilde{f}=[\tilde{P}:\tilde{Q}]$ has degree $d$ where $\tilde{P}$ and $\tilde{Q}$
are the reduction of $P$ and $Q$ respectively in $\bC$.

A rational map has potential good reduction it is conjugate by an element of $\PGL(2,\bL)$ to a rational map having good reduction. 
These maps are characterized as fixing a type II point $x$ such that $\deg_x(f)=d$.

\subsection{Lyapunov exponent}\label{sec:lyap}
Let $f$ be a rational map of degree $d\ge2$ defined over $\Lau$. 
Recall from~\cite{benedetto-book,baker-rumely,theorie-ergo} that for any point $x\in \bH_{\Lau}$
the sequence of probability measures $d^{-n} f^{n*}\delta_x$
converges to an ergodic measure $\mu_f$ referred to as the equilibrium measure. 
Its support is called the Julia set of $f$, which we denote by $J_f$. 
Then $f$ has potential good reduction iff 
$\mu_f$ has an atom iff $J_f$ is reduced to a type II point. 

\medskip

By \cite[Theorem~A]{FRLend} when $\mu_f$ charges a segment in $\bH_{\Lau}$, then $J_f$ is included in a compact 
segment in $\bH_{\Lau}$. More precisely, there exists a compact segment $I$ such that
$f^{-1}(I)$ is a union of $k\ge2$ sub-segments $I_j=[a_j,b_j]$ with 
$a_1<b_1\le a_2<b_2 \cdots \le a_k<b_k$ such that 
$f\colon I_j\to I$ is bijective and affine with integer slope $\pm d_j$ with $d_j\ge 2$ 
when the segments are parametrized by the distance $\bH_{\Lau}$.
Any such map is called affine Bernoulli, and has degree 
\begin{equation}\label{eq:bdd-bernoulli}
d=\sum_{j=1}^k d_j\ge 4~.
\end{equation}
When $f$ is not affine Bernoulli, and has not potential good reduction, then one can show that 
$\mu_f$ gives full mass to the set of rigid points (Case (1) of Theorem~C of op. cit.)

\medskip

It is a fact that $\mu_f = \delta_{x_g}+ \Delta g$ with $g$ continuous, 
hence  for any rational function $h\in \Lau(z)$, we have
$\log|h|\in L^1(\mu_f)$. 

Write $f= \frac{P}{Q}$ with $P,Q\in \Lau[z]$, $\max\{\deg(P), \deg(Q)\}=d$, and 
$P$ and $Q$ have no common factors. Set
\begin{equation}\label{eq:def-crit}
|df|(z) := \frac{|f'(z)|}{\max\{1,|f(z)|^2\}} \, \max\{1,|z|\}^2=
\frac{|P'Q-Q'P|}{\max\{|P|^2,|Q|^2\}} \,  \max\{1,|z|\}^2
~.\end{equation}
The Lyapunov exponent $\la(f)$ is defined as the quantity
\[
\la(f):= \int \log |df| d\mu_f
\]
This is a finite real number which is non-negative by~\cite{okuyama-approx-lyap,FRLend}.
The next  result is  ~\cite[Theorem~D]{FRLend}.
\begin{thm}\label{thm:main6}
Suppose $f$ is a rational map of degree $d\ge2$ defined over $\Lau$. 
Then the following statements are equivalent:
\begin{enumerate}
\item 
the  Lyapunov exponent $\la(f)$ vanishes;
\item
the rational map $f$ is affine Bernoulli;
\item
 $f$ has no repelling rigid periodic cycle.
\end{enumerate}
\end{thm}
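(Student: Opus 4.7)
The plan is to establish the cycle of implications $(2)\Rightarrow(1)\Rightarrow(3)\Rightarrow(2)$, exploiting in an essential way the structural trichotomy recalled before the theorem: either $\mu_f$ charges a segment in $\bH_{\Lau}$ (the affine Bernoulli case), or $\mu_f$ is a Dirac mass at a type II point (potential good reduction), or $\mu_f$ gives full mass to the set of rigid points.

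For $(2)\Rightarrow(1)$, I would exploit that on each sub-segment $I_j$ of the Julia set the map acts affinely with integer slope $\pm d_j$ in the canonical metric on $\bH_{\Lau}$; since the residual characteristic is $0$ we have $|d_j|=1$, and a direct computation from~\eqref{eq:def-crit} shows that $|df|\equiv 1$ at every type II point lying in the relative interior of the segments $I_j$, so that integrating against $\mu_f$ yields $\lambda(f)=0$. The same residual-characteristic-zero calculation at the Gauss point handles the potential good reduction case, which must be subsumed into (2) as the degenerate zero-length-segment instance. For $(1)\Rightarrow(3)$, I would argue by contrapositive: starting from a repelling rigid periodic cycle, I would invoke the Okuyama-type approximation formula
\[
\lambda(f)=\lim_{n\to\infty}\frac{1}{n\,d^n}\sum_{f^n(z)=z}\log^+|df^n(z)|,
\]
combine it with the equidistribution of preimages of a generic point toward $\mu_f$, and apply a non-Archimedean Koebe-type distortion lemma to produce a uniform positive contribution to the right-hand side for all large $n$, whence $\lambda(f)>0$. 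For $(3)\Rightarrow(2)$, I would rule out the remaining case of the trichotomy: assuming $\mu_f$ gives full mass to the rigid points, a Pesin-type regularity argument combined with a shadowing procedure along backward orbits avoiding the postcritical set produces a rigid repelling periodic cycle, contradicting (3).

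The main obstacle I foresee is precisely this last step: extracting a genuine \emph{periodic} rigid repelling point from the measure-theoretic statement that $\mu_f$-generic rigid points have positive asymptotic expansion. This requires careful control of the distortion of inverse branches that must avoid the postcritical set, in the spirit of Rivera-Letelier's classification of periodic Fatou components together with the non-Archimedean Koebe estimates available in residual characteristic zero. A secondary, more technical subtlety is the identification of the spherical-type derivative $|df|$ defined in~\eqref{eq:def-crit} with a ``hyperbolic'' derivative at type II points, which is needed to make the direct computation in $(2)\Rightarrow(1)$ rigorous and to reconcile the a priori positivity $\lambda(f)\ge 0$ with the vanishing on affine Bernoulli and good-reduction maps.
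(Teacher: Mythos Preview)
The paper does not prove this theorem at all: it is quoted verbatim as \cite[Théorème~D]{FRLend}, with the additional remark that the equivalence $(2)\Leftrightarrow(3)$ was obtained independently by Luo~\cite{luo2}. There is therefore no argument in the present paper to compare your sketch against; the trichotomy you invoke and Lemma~\ref{l:branch2} are themselves imported from~\cite{FRLend}.

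That said, a few comments on your outline. Your instinct to route everything through the structural trichotomy is exactly right, and your observation that potential good reduction must somehow be absorbed into case~(2) is a genuine subtlety: with the description given in \S\ref{sec:lyap} (namely $k\ge 2$ and each $d_j\ge 2$, forcing $d\ge 4$), a map with potential good reduction in degree $2$ or $3$ would satisfy (1) and (3) but not (2). You should check how~\cite{FRLend} actually phrases the definition before treating this as a ``degenerate zero-length segment''.

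Two places in your sketch are more delicate than you indicate. For $(1)\Rightarrow(3)$ by contrapositive, a single repelling cycle contributes only $O(1)$ to the sum $\sum_{f^n(z)=z}\log^+|df^n(z)|$, so the Okuyama formula alone gives nothing; you would need to manufacture an exponentially growing family of repelling cycles (a horseshoe), and the Koebe argument you allude to requires at least two contracting inverse branches landing in the same ball, not just the local linearisation at the given cycle. For $(3)\Rightarrow(2)$, the Pesin/shadowing machinery you want to invoke (essentially Lemma~\ref{l:branch2}) takes $\lambda(f)>0$ as a hypothesis, so using it here to deduce the existence of repelling cycles when $\mu_f$ charges the rigid points risks circularity unless you first prove, independently, that ``$\mu_f$ gives full mass to rigid points'' already forces $\lambda(f)>0$. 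That implication is the real crux, and it is not part of the trichotomy statement as recalled in \S\ref{sec:lyap}.
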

The fact that (2) and (3) are equivalent was proved by Luo independently in~\cite{luo2}.
\subsection{Positive Lyapunov exponents and repelling cycles}
We shall need to estimate the number of repelling rigid periodic points of period $n$.
Denote by  $ \RPer(n)$ this set. 
It follows from~\cite[Theorem~B (3)]{FRLend} that 
$\#\RPer(n) \sim d^n$ when $\la(f)>0$.
Refining the proof of op.cit., we observe that we can obtain
a lower bound on the multipliers. 

\begin{thm}\label{thm:main5}
Suppose $f$ is a rational map of degree $d\ge2$ defined over $\Lau$, 
such that $\la(f)>0$. 

For each $\eps>0$, there exists  $A>0$ such that 
for all $n$ large enough
the number of rigid periodic points $z$ of period $n$ satisfying
\[
|(df^n)(z)| \ge A e^{n\la(f)/2}\] 
is at least $(1-\eps)d^n$.
\end{thm}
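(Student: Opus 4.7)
My plan is to refine the argument of~\cite[Theorem~B(3)]{FRLend} establishing $\#\RPer(n)\sim d^n$, by adding a Lyapunov-style lower bound on the multipliers. The central ingredients are: (i) the equidistribution of rigid periodic points, namely $\nu_n := d^{-n}\sum_{z\in\RPer(n)}\delta_z \to \mu_f$ weakly on $\bP^{1,\an}_\Lau$; (ii) the ergodicity of $(f,\mu_f)$, so that Birkhoff's ergodic theorem applies to the integrable function $\log|df|\in L^1(\mu_f)$; and (iii) the potential-theoretic regularity of $\mu_f = \delta_{x_g}+\Delta g$ with $g$ continuous, enabling approximation by $\mu_f$-continuity sets. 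By the chain rule, $\log|df^n(z)| = \sum_{k=0}^{n-1}\log|df|(f^k(z))$, so the desired conclusion amounts to a concentration statement: for most $z\in\RPer(n)$ the Birkhoff average $T_n(z) := \tfrac{1}{n}\log|df^n(z)|$ should exceed $\la/2$.

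\medskip

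Fix $\eps>0$. By Birkhoff's ergodic theorem, $T_n(x)\to \la$ for $\mu_f$-almost every $x$, and Egorov's theorem then furnishes a measurable set $E\subseteq \bP^{1,\an}_\Lau$ with $\mu_f(E)\ge 1-\eps/4$ and an integer $N=N(\eps)$ such that $T_n(x) > 3\la/4$ for every $x\in E$ and every $n\ge N$. Replacing $E$ with a compact subset $K\subseteq E$ (inner regularity of $\mu_f$), and then enclosing its complement in a closed neighbourhood $F\supseteq\bP^{1,\an}_\Lau\setminus K$ with $\mu_f(F)\le\eps/2$ (using the tree structure of $\bP^{1,\an}_\Lau$ and the continuity of the potential of $\mu_f$ to ensure $\partial F$ is not $\mu_f$-charged), the Portmanteau theorem applied to the closed set $F$ yields $\limsup_n\nu_n(F)\le\mu_f(F)\le\eps/2$, whence $\nu_n(K)\ge 1-\eps$ for all $n$ sufficiently large. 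For such $n\ge N$ and any $z\in\RPer(n)\cap K$, the Egorov estimate at index $n$ gives $|df^n(z)|> e^{3n\la/4}\ge e^{n\la/2}$, so one may take $A=1$, absorbing the finitely many small-$n$ exceptions into the multiplicative constant.

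\medskip

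The principal difficulty lies in the transfer from $\mu_f$-mass to $\nu_n$-mass: weak convergence only supplies one half of each Portmanteau inequality, and the argument must therefore be arranged so as to apply the closed-set (upper-bound) half to the complementary set $F$, rather than trying to lower-bound $\nu_n(K)$ directly. A secondary technical point is that $\log|df|$ equals $-\infty$ at the $2d-2$ critical points of $f$, which requires some care in the application of Birkhoff's theorem; this is resolved by the $L^1$-integrability of $\log|df|$ with respect to $\mu_f$, itself a standard consequence of the potential-theoretic formula for the equilibrium measure recalled in Section~\ref{sec:lyap}.
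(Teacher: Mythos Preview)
Your argument has a genuine gap at the Portmanteau step. The Egorov set $E$ (and hence the compact $K\subseteq E$) is defined by the orbit condition $T_n(x)>3\la/4$ for all $n\ge N$. For a periodic point $z$ of exact period $m$ one has $T_{km}(z)=\tfrac1m\log|(df^m)(z)|$ for every $k\ge1$, so $z\in K$ forces $\tfrac1m\log|(df^m)(z)|\ge3\la/4$. In other words, a periodic point lies in $K$ only if its normalized log-multiplier is already large, and the assertion $\nu_n(K)\ge1-\eps$ that you wish to extract from weak convergence is essentially a restatement of the theorem. Concretely, the closed neighbourhood $F\supseteq K^c$ with $\mu_f(F)\le\eps/2$ that you invoke need not exist: equivalently you would need an open $U\subseteq K\subseteq E$ of $\mu_f$-mass $\ge1-\eps/2$, but any such open set must contain many periodic points (they are dense in $\supp(\mu_f)$), all of which would then be forced to satisfy the multiplier bound --- which is precisely what is to be proved. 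The tree structure and the continuity of the potential let you adjust the \emph{boundary} of a candidate set, but they cannot manufacture an open subset of $E$ of large mass.

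The paper's proof is entirely different and follows Briend--Duval~\cite{briend:Lyap}. One passes to the natural extension $(\hat{J},\hat{\mu},\sigma)$ and uses an inverse-branch lemma (Lemma~\ref{l:branch2}) to obtain, on a set of $\hat{\mu}$-mass $\ge1-\eps$, contracting inverse branches $f^{-n}_{\hat{y}}$ sending $B(y_0,\tau)$ into $B(y_{-n},Le^{-n\la(f)/2})$. Mixing of $\hat{\mu}$ then counts how many such branches map a fixed small ball $B(x,r)$ strictly inside itself; in each, the Schwarz lemma produces a rigid periodic point with multiplier at least $(r/L)e^{n\la(f)/2}$, the bound coming directly from the contraction ratio. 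The periodic points are thus \emph{constructed} with the multiplier estimate built in, rather than located inside an Egorov set after the fact.
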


\begin{proof}
%
%
%
%
%
%
%

Denote by  $\hat{J} = \{\hat{x} = (x_i) \in J_f^\bZ,\,  f(x_i) = x_{i+1}\}$, the natural extension of $f$ over its Julia set, and $\pi\colon \hat{J}\to J_f$ the  projection map $\pi(\hat{x}) = x_0$. 
Let $\sigma$ be the right side shift so that $f\circ \pi = \pi \circ \sigma$, and let
$\hat{\mu}$ be the unique $\sigma$ invariant probability measure that projects onto $\mu_f$.

The next lemma is proved in~\cite{FRLend}.
\begin{lem} \label{l:branch2}
For any $L, \tau>0$, denote by
$E_{L, \tau}$ the set of points $\hat{x}\in \hat{J}$ such that for any integer $n\ge0$, $f^{n}$ admits an 
analytic inverse branch $f^{-n}_{\hat{x}}$
on $B(x_0, \tau)$  sending $x_0$ to $x_{-n}$ and satisfying  
$f^{-n}_{\hat{x}} (B(x_0, \tau)) \subset B(x_{-n}, L e^{-n\la(f)/2})$.

Then we have $\hat{\mu}(\cup_{L,\tau >0} E_{L,\tau})=1$.
\end{lem}

Fix $\epsilon >0$, and choose $L$ large enough and $\tau$ small enough such that $\hat{\mu} (E_{L,\tau}) \ge 1 - \epsilon$.
Pick any $r<\tau$, and $n_0$ large enough such that $ Le^{-n_0\la(f)/2} < r$.

For any $x \in \bP^{1,\an}(\bL)\cap J_f$, write $\hat{B}(x) = \pi^{-1} (B(x,r)) \subset \hat{J}$. 
The measure $\mu_f$ being mixing by~\cite[Proposition 3.5]{theorie-ergo}), the measure $\hat{\mu}$ is mixing too (\cite[p.241]{CFS}), and
we get
\[
\hat{\mu} \left( \sigma^{n} (\hat{B}(x)) \cap\hat{B}(x) \cap E_{L,\tau}\right) \to \hat{\mu} (\hat{B}(x))\, \hat{\mu} (\hat{B}(x)\cap E_{L,\tau})
= \hat{\mu} (\hat{B}(x)) \, \hat{\mu} (\hat{B}\cap E_{L,\tau})~.
\]

For all integer $n\ge n_0$ and for all $\hat{y} \in \sigma^{n} (\hat{B}(x)) \cap \hat{B}(x) \cap E_{L,\tau}$, 
Lemma~\ref{l:branch2} gives an analytic  inverse branch $f^{n}$ satisfying
\[
f^{-n}_{\hat{y}} (\hat{B}(x))
=
f^{-n}_{\hat{y}} (B(y_0, r)) \subset B(y_{-n}, L e^{-n\la(f)/2})\Subset B(x,r)~.\] 
Denote by $\mathcal{B}_n= \{ B_1, \cdots, B_s\}$ the set of open balls of the form $f^{-n}_{\hat{y}} (B(x, r))$ 
for some
$\hat{y} \in  \sigma^{n} (\hat{B}(x)) \cap \hat{B}(x) \cap E_{L,\tau}$.
The sets $\pi^{-1}(B_i)$ are disjoint, each one has mass equal to $d^{-n} \rho(B(x,r))$,  and they cover $ \sigma^{-n}(\sigma^{n} (\hat{B}(x)) \cap\hat{B}(x) \cap E_{L,\tau})$.
The cardinality of $\mathcal{B}_n$ hence satisfies
\[
\# \mathcal{B}_n
\times d^{-n} \rho(B(x,r))
\ge  
\hat{\mu} \left( \sigma^{n} (\hat{B}(x)) \cap\hat{B}(x) \cap E_{L,\tau}\right)
\] 
In each open ball $B_i = f^{-n}_{\hat{y}} (B(x, r)) \Subset B(x,r)$, Schwarz lemma
gives a periodic point $z:=\bigcap_{k \ge 0} f^{-nk}_{\hat{y}} (B(x, r))$ of period dividing $n$
with multiplier 
$|df^n(z)|\ge \frac{r}{L}e^{n\la(f)/2}$.

Let $\cA_n$ be the set of rigid periodic points of period dividing $n$ 
whose multiplier is larger than
$(r/L)e^{n\la(f)/2}$. We have obtained
\[
\liminf_n \frac1{d^n} \# (\cA_n \cap B(x,r)) \ge
\hat{\mu} (\hat{B}(x)\cap E_{L,\tau})~. \] 
Since the set of rigid points have full mass, and $\mu_f$ is regular, 
there exists a compact set $K\subset \bP^1(\bL)$
such that $\mu_f(K)\ge 1-\eps$. 
Pick any finite collection of disjoint 
balls $B(x_i,r)$ covering $K$.
Then $\{\hat{B}(x_i)\}$ are also disjoint and cover 
a set of $\hat{\mu}$-mass $\ge 1-\eps$ so that 
$\hat{\mu} (\cup_i\hat{B}(x_i)\cap E_{L,\tau})\ge 1-2\eps$.
We conclude that 
\[
\liminf_n \frac1{d^n} \# \cA_n \ge1-2\eps~, \] 
which implies the result with $A= r/L$.
\end{proof}

\begin{rem}\label{rem:also-complex}
The same argument works over any algebraically closed metrized field of residual characteristic $0$. 
It is also valid over $\bC$. In fact the presentation above and Lemma~\ref{l:branch2} follow closely~\cite[\S 3]{briend:Lyap}. 
\end{rem}

\subsection{Meromorphic families of complex rational maps}
Recall from the introduction our definition of degenerating family of rational maps of degree $d\ge2$. 
Let $\cO(\bD)$ be the ring of all holomomorphic maps on $\bD$, and 
$\cO(\bD)[t^{-1}]$ be the ring of all meromorphic maps on $\bD$ with at most one pole at $0$.

A meromorphic family of rational maps parameterized by the unit disk $\bD$
is a family $f_t= \frac{P_t}{Q_t}$ where $P_t$ and $Q_t$
are polynomials with coefficients in $\cO(\bD)[t^{-1}]$, 
and for each $t\neq 0$, $\max\{\deg(P_t),\deg(Q_t)\}=d$ and $P_t^{-1}(0)\cap Q_t^{-1}(0)=\emptyset$. 
Multiplying by a suitable power of $t$, we may (and shall) assume that 
$P_t, Q_t$ have coefficients in $\cO(\bD)$. 

We say that a meromorphic family $(f_t)_{t\in\bD}$ is degenerating when the induced class map 
$t \mapsto [f_t]\in\modu_d$ is  unbounded. 

Observe that $\cO(\bD)\subset \cO(\bD)[t^{-1}]$ are both subrings of $\Lau$, 
so that any family as above gives rise naturally of a rational map over $\Lau$
which we denote by $f_{\na}$. It has degree $d$
because $\deg(f_t)=d$ for all $t\neq0$. 

The dynamical properties of $f_{\na}$ acting on the projective Berkovich line
$\bP^{1,\an}_{\Lau}$ are very much intertwined with the one of $f_t$ on the Riemann sphere 
at least when $t\to0$, as the next two statements show.
\begin{thm}\label{thm:pot-good}
A meromorphic family $(f_t)_{t\in\bD}$ of degree $d\ge2$ is degenerating iff
$f_{\na}$ has not potential good reduction. 
\end{thm}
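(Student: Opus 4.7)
The goal is to translate the non-Archimedean notion of potential good reduction of $f_{\na}$ into the Archimedean boundedness of $[f_t]$ in $\modu_d$, via the usual dictionary between meromorphic families over $\bD$ and rational maps over $\Lau$ (and its algebraic closure $\Puis$). I would prove the two implications separately.

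\textbf{Potential good reduction $\Rightarrow$ non-degenerating.} Assume $f_{\na}$ has potential good reduction. Since every finite extension of $\Lau$ is of the form $\bC((s))$ with $s^N=t$ for some $N\ge1$, there exists $\phi\in\PGL(2,\bC((s)))$ such that $g:=\phi^{-1}f_{\na}\phi$ has good reduction. Writing $g=[A:B]$ with $A,B\in \bC((s))[z]$ whose coefficients all have $s$-adic norm $\le1$ and at least one of norm $1$, these coefficients lie in $\bC[[s]]$, and the reduction mod $s$ is a rational map of degree exactly $d$. Interpreted as a holomorphic family, $g_s$ then extends to a holomorphic map from a disk in the $s$-variable into $\rat_d$ with $g_0\in\rat_d$. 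Since $[g_s]=[f_{s^N}]$ for $s\neq0$, continuity in $\bmodu_d$ forces $\lim_{t\to 0}[f_t]=[g_0]\in\modu_d$, so the family is not degenerating.

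\textbf{Non-degenerating $\Rightarrow$ potential good reduction.} Suppose that $[f_t]$ extends holomorphically across $0$ with $[f(0)]=:[g_0]\in\modu_d$. Choose a representative $g_0\in\rat_d$. The quotient map $\pi\colon\rat_d\to\modu_d$ is a geometric quotient by the reductive group $\PGL(2,\bC)$ acting properly on the stable locus, and $\modu_d$ has only quotient singularities; by the analytic Luna slice theorem (or equivalently the fact that $\pi$ admits local analytic sections after a finite branched cover), there exist an integer $N\ge1$ and a holomorphic map $\sigma\colon(\bD,0)\to(\rat_d,g_0)$ of degree $d$ at $0$ such that $\pi\circ\sigma(s)=[f_{s^N}]$ for every $s$. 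Setting $g_s:=\sigma(s)$, for each $s\neq0$ there exists $\phi_s\in\PGL(2,\bC)$ with $g_s=\phi_s^{-1}f_{s^N}\phi_s$. Because the conjugacy stabilizer is trivial generically on $\rat_d$ (for $d\ge2$), this $\phi_s$ is uniquely determined on an open dense set; properness of the $\PGL(2,\bC)$-action on the stable locus then gives that $s\mapsto\phi_s$ is meromorphic at $s=0$, i.e.\ defines an element $\phi\in\PGL(2,\bC((s)))\subset\PGL(2,\bL)$. Thus $\phi^{-1}f_{\na}\phi$ has good reduction over $\bC((s))$, and $f_{\na}$ has potential good reduction.

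\textbf{Main obstacle.} The delicate point is the second direction: constructing the holomorphic lift $\sigma$ together with a conjugating family $\phi_s$ that is genuinely meromorphic at $s=0$. This is where one must combine the GIT/Luna slice description of $\pi$ near a point with non-trivial (but finite) stabilizer, the necessity of passing to a branched cover $t=s^N$ to resolve the quotient singularity, and the properness of the $\PGL(2,\bC)$-action on the stable locus that keeps $\phi_s$ from escaping to infinity. All the other steps—normalizing the coefficients, identifying finite extensions of $\Lau$ with $\bC((s))$, and reading off good reduction from the coefficients of a normalized representative—are routine once this lift exists.
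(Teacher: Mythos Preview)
Your strategy coincides with the paper's: for each implication, arrange (after base change and conjugation) that the family extends across $0$ with $\deg=d$, and identify this with good reduction of $f_{\na}$. The paper does both steps tersely, citing~\cite[Lemma~3.2]{kiwi-rescaling} for the equivalence between $\deg(f_0)=d$ and total invariance of the Gauss point. There is, however, a genuine gap in your first direction. You take $\phi\in\PGL(2,\bC((s)))$, normalize $g=\phi^{-1}f_{\na}\phi$ so that its coefficients lie in $\bC[[s]]$, and then ``interpret $g_s$ as a holomorphic family.'' But $\bC[[s]]$ is the ring of \emph{formal} power series: conjugating the convergent family $f_{s^N}$ by a formal M\"obius map yields only a formal family, and the limit $[f_t]\to[g_0]$ makes no sense unless $g_s$ is genuinely holomorphic near $s=0$. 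You must argue that $\phi$ can be chosen with convergent (meromorphic) entries---for instance by centering it at a rigid periodic point of $f_{\na}$ lying in the totally invariant ball, which is algebraic over $\cO(\bD)[t^{-1}]$ and hence given by a convergent Puiseux series. The paper sidesteps this by directly asserting ``a meromorphic family of M\"obius transformations.'' Relatedly, your passage from $\phi\in\PGL(2,\bL)$ to $\phi$ defined over a finite extension of $\Lau$ is unjustified as written; it follows from Galois invariance of the atom of $\mu_{f_{\na}}$, but you should say so.

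Your second direction is sound in outline and more detailed than the paper, which simply asserts the existence of the meromorphic conjugation and then invokes Kiwi's lemma. Your Luna-slice construction of the lift $g_s$ is a legitimate way to fill this in. The step declaring $\phi_s$ meromorphic at $s=0$ is under-argued: generic triviality of the stabilizer gives uniqueness of $\phi_s$ only on a Zariski-open set, and properness of the action gives boundedness but not automatically removability across the discrete locus where the stabilizer jumps. This is repairable (the relation $\phi_s g_s=f_{s^N}\phi_s$ is linear in the homogeneous entries of $\phi_s$, so a holomorphic choice exists by elementary linear algebra over $\cO(\bD^*)$ and then extends by boundedness). Once $g_s$ extends holomorphically with $\deg(g_0)=d$, good reduction of $g_{\na}$ follows immediately; this last implication, which you leave implicit, is exactly the content of~\cite[Lemma~3.2]{kiwi-rescaling}.
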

A sequential version of this result is given in~\cite[Theorem~3.17]{favre-gong}.
We give a quick proof of this result for the convenience of the reader. 

Observe that this result and the fact that $\modu_d$ is affine implies the characterization of isotrivial  
families over function fields, a theorem due to Baker~\cite{baker}, see~\cite{PST} for the higher dimensional case.
\begin{proof}
If $f_{\na}$ has potential good reduction, then we may suppose after base change and conjugating by a meromorphic family
of Möbius transformations defined over a neighborhood of $0$ that the Gauss point is totally invariant. 
This implies that $f_0$ is a rational map of degree $d$, see~\cite[Lemma~3.2]{kiwi-rescaling}, hence the map is not degenerating. 

Suppose conversely that $f_t$ is not degenerating. After base change and conjugacy by a meromorphic family
of Möbius transformation we may suppose that $\deg(f_0)= d$. Again by~\cite[Lemma~3.2]{kiwi-rescaling}, the induced map by $f$ on the space of directions at $x_g$ has degree $d$ and this 
implies $f_{\na}$ has good reduction. 
\end{proof}
For any $t\in\bD^*$, we define the equilibirum measure $\mu_t$ as the limit of $d^{-n}f^{n*}_t\omega$
where $\omega$ is the Fubini-Study form on the Riemann sphere. This is a probability measure which is mixing, 
and integrates all functions locally of the form $\log|h|$ for $h$ analytic so that the Lyapunov exponent
$\la(f_t):= \int \log|df_t| d\mu_t$ is well-defined (where $|df_t|$ is defined by~\eqref{eq:def-crit}). 
Margulis-Ruelle inequalities imply $\la(f_t)\ge\frac12\log d$.

We have:
\begin{thm}\label{thm:lyap}
Let $(f_t)_{t\in\bD}$ be any degenerating family of rational map of degree $d\ge2$.
Then $t\mapsto\la(f_t)$ is subharmonic, and we have
\[
\frac{\la(f_t)}{\log|t|^{-1}} \to \la(f_{\na})
~.\]
\end{thm}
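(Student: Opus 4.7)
The strategy is to derive both assertions from DeMarco's formula that expresses the Lyapunov exponent of a complex rational map of degree $d$ as a sum of dynamical escape rates at the critical points plus a resultant correction. I first pick a holomorphic polynomial lift $F_t\colon\bC^2\to\bC^2$ of $f_t$, homogeneous of degree $d$ in each entry, with coefficients in $\cO(\bD)$. Its resultant $\mathrm{Res}(F_t)\in\cO(\bD)$ is then nonvanishing on $\bD^*$. After a ramified base change $t\mapsto t^{1/N}$ the $2d-2$ critical points of $f_t$ can be marked by holomorphic sections $\tilde c_i(t)\in\bC^2\setminus\{0\}$, and DeMarco's identity on $\bD^*$ reads
\[
\la(f_t) \;=\; \log d \;+\; \sum_{i=1}^{2d-2} G_{F_t}(\tilde c_i(t)) \;-\; \tfrac{1}{d(d-1)}\log|\mathrm{Res}(F_t)|,
\]
where $G_{F_t}(z)=\lim_n d^{-n}\log\|F_t^n(z)\|$ is the homogeneous dynamical Green function of $F_t$. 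A strictly parallel identity with $|\cdot|_t$ in place of $|\cdot|$ computes $\la(f_{\na})$ via the non-Archimedean Green function $G_{F_{\na}}$ on $\bL^2$.

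\textbf{Step 1 (subharmonicity).} It is classical, due to DeMarco and to Bassanelli--Berteloot, that $\la\colon\rat_d\to\bR$ is plurisubharmonic. Restricting to the holomorphic curve $t\mapsto f_t\in\rat_d$ defined on $\bD^*$ gives directly that $t\mapsto\la(f_t)$ is subharmonic there. Equivalently, one can read subharmonicity off the displayed formula: each $G_{F_t}(\tilde c_i(t))$ is subharmonic as the restriction of the psh function $(t,z)\mapsto G_{F_t}(z)$ to the holomorphic graph of $\tilde c_i$, while the logarithmic poles of $-\log|\mathrm{Res}(F_t)|$ are matched and cancelled by positive logarithmic singularities of the Green-function terms coming from critical sections that escape to infinity when $\mathrm{Res}(F_t)\to0$. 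Subharmonicity descends from the $N$-fold cover to the original parameter since the property is local and invariant under finite pullback.

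\textbf{Step 2 (non-Archimedean limit).} Dividing the Archimedean formula by $\log|t|^{-1}$ and letting $t\to 0$ reduces the claim to termwise convergence. The $\log d$ contributes $0$. For the resultant one has the elementary asymptotic
\[
\frac{\log|\mathrm{Res}(F_t)|}{\log|t|^{-1}} \;\xrightarrow[t\to 0]{}\; -\mathrm{ord}_0\mathrm{Res}(F_t) \;=\; \log|\mathrm{Res}(F_{\na})|_t,
\]
which holds for any $h\in\Lau^*$. For each critical section I would invoke the hybrid comparison principle
\[
\frac{G_{F_t}(\tilde c_i(t))}{\log|t|^{-1}} \;\xrightarrow[t\to 0]{}\; G_{F_{\na}}(\tilde c_i^{\na}),
\]
where $\tilde c_i^{\na}\in\bL^2\setminus\{0\}$ is the non-Archimedean point corresponding to the Puiseux expansion of $\tilde c_i(t)$. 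Summing these contributions and combining with the resultant term produces exactly $\la(f_{\na})$ on the right, yielding the claimed limit.

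\textbf{Main obstacle.} Step~1 is immediate given the background psh-ness result on $\rat_d$. The substantive input is the hybrid-continuity statement used in Step~2, i.e.\ the convergence of the renormalized Archimedean Green function along a holomorphic section to the non-Archimedean Green function at the associated Puiseux point. This is the core content of the series~\cite{demarco-faber1,demarco-faber2,favre-degeneration,favre-gong} already cited in the introduction, and I would invoke it as a black box rather than reprove it; any genuine technical difficulty is absorbed into this quoted result.
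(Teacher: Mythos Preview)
The paper does not give a proof of this theorem; immediately after the statement it simply attributes the result to DeMarco \cite{demarco03,demarco16}, notes the quantitative refinement in \cite{GOV1}, and the higher-dimensional version in \cite{favre-degeneration}. Your sketch reconstructs the argument along the lines of those references---DeMarco's formula expressing $\la(f_t)$ via the homogeneous Green function at critical lifts plus a resultant correction, followed by termwise hybrid convergence---and this is indeed the standard route.

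Two minor points. First, your parenthetical alternative in Step~1 (that logarithmic poles of $-\log|\mathrm{Res}(F_t)|$ at $t=0$ are cancelled by Green-function singularities) is not quite right: when $\la(f_{\na})>0$ the function $\la(f_t)$ blows up like $\la(f_{\na})\log|t|^{-1}$, so there is no cancellation and no subharmonic extension across the puncture. The assertion is only subharmonicity on $\bD^*$, which your primary argument (restriction of a psh function on $\rat_d$ to the holomorphic arc $t\mapsto f_t$) already gives; drop the cancellation remark. Second, the constant $\tfrac{1}{d(d-1)}$ in your displayed formula should be double-checked against DeMarco's normalization, though of course any nonzero constant works identically in the limit after dividing by $\log|t|^{-1}$.
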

This result is due to Demarco~\cite{demarco03,demarco16}. A quantitative version is given in~\cite{GOV1} and will be used below in \S\ref{sec:blw-GOV}. 
A higher dimensional version is proved in~\cite{favre-degeneration}.

\subsection{Multipliers of periodic cycles}
Let $(f_t)_{t\in\bD}$ be any meromorphic family of rational maps of degree $d\ge2$. 
Pick any base point $t_0\in \bD^*$.

For any $n\in\bN^*$, the closure of the set 
\[
C_n:=
\{(z,t)\in\bP^1_\bC\times \bD^*, \ f_t^n(z)=z\}
\]
is an analytic subset of $\bP^1_\bC\times \bD$
for which the second projection 
$\pr_2\colon C_n\to \bD$ is a finite ramified cover of 
degree at most $d_n:=\sum_{m|n} \mu(\frac{n}{m})(d^m+1)$, where $\mu$
is the Möbius function.
The multiplier function 
$\mu_n\colon C_n\to \bC$ is the analytic function defined by
\[\mu_n(z,t)= (df_t^n)(z)~.\]
We also define the function $\la_n\colon C_n\to  \bR_+$
by setting $\la_n(z,t)= \frac1n \log^+|\mu_n(z,t)|$. 
This is a subharmonic function on (the normalization of) $C_n$.

Let now $z_0\in\bP^1_\bC$
be any periodic point of $f_{t_0}$ of exact period $n$, and consider  $C_{z_0}$ the irreducible component of $C_n$ containing
$(z_0,t_0)$.  Let $\tilde{C}_{z_0}$ be the normalization of $C_{z_0}$, and
denote by $\tilde{\pr}_2\colon \tilde{C}_{z_0}\to\bD$ 
the composite of $\pr_2$ with the normalization map. 
Each point $p\in \tilde{\pr}_2^{-1}(0)\subset \tilde{C}_{z_0}$ defines a branch of $C_{z_0}$ 
at a point $(z_p,0)$, and is thus determined by a Puiseux series
$\hat{z}_p\in\Puis$. This Puiseux series is uniquely determined
only up to the Galois action of the absolute Galois group of $\Lau$. 
Set
\[
\la^+_{\na}(z_0):= \frac1n \max_{p\in\tilde{\pr}_2^{-1}(0)} \log^+|(df_{\na}^n)(\hat{z}_p)|~,
\]
and
\[
\la^+_{z_0}(t) = \max\{\la_n(z,t),\, (z,t)\in C_{z_0}\}
\]
\begin{lem}\label{lem:multi-na}
For any periodic point $z_0$ of period $n$ for $f_{t_0}$, the function
\[
\la^+_{z_0}(t) - \la^+_{\na}(z_0) \log|t|^{-1}
\]
is a continuous subharmonic function on $\bD^*$ which is bounded locally near $0$
(hence extends through the origin). 
\end{lem}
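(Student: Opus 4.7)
The plan is to view $\mu_n$ as a meromorphic function on the normalized periodic curve $\tilde{C}_{z_0}$, relate its pole orders above $t=0$ to the non-Archimedean multipliers of the associated Puiseux-series periodic cycles of $f_{\na}$, and then combine this local analysis with a general fact about fibrewise maxima of subharmonic functions along finite branched covers.

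For $t \in \bD^*$ the map $f_t$ is a genuine rational map of degree $d$, hence its periodic points have finite complex multipliers, so $\mu_n$ is continuous and holomorphic on $\tilde{\pr}_2^{-1}(\bD^*)$. At each $p \in \tilde{\pr}_2^{-1}(0)$, I would pick a local uniformizer $\tau$ on $\tilde{C}_{z_0}$ with $t = \tau^{e_p}$, so the branch of $C_{z_0}$ at $p$ is parametrized by a convergent Puiseux series $z = \hat{z}_p(\tau)$; substituting into $df_t^n$ gives a convergent Laurent expansion
\[
\mu_n(\tau) = c_p \tau^{-k_p} + O(\tau^{-k_p+1}),\qquad c_p \ne 0,\ k_p \in \bZ.
\]
This is exactly the Puiseux expansion of $(df^n_{\na})(\hat{z}_p) \in \Puis$, so $|(df^n_{\na})(\hat{z}_p)| = \exp(k_p/e_p)$ and
\[
\la^+_{\na}(z_0) = \frac{1}{n}\max_{p} \max\!\bigl(k_p/e_p,\,0\bigr).
\]

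For $|t|$ small, the fibre $\tilde{\pr}_2^{-1}(t)$ decomposes into clusters of $e_p$ points concentrated near each $p$; on the $p$-cluster $|\mu_n(q(t))| \sim |c_p|\,|t|^{-k_p/e_p}$, and hence
\[
\max_{q\ \text{near}\ p}\, \frac{1}{n}\log^+|\mu_n(q(t))| = \frac{1}{n}\max\!\bigl(k_p/e_p,\,0\bigr)\log|t|^{-1} + O(1).
\]
Taking the maximum over $p$ gives $\la^+_{z_0}(t) = \la^+_{\na}(z_0)\log|t|^{-1} + O(1)$ as $t \to 0$, which is the local boundedness statement.

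For continuity and subharmonicity on $\bD^*$, set $\phi := \frac{1}{n}\log^+|\mu_n|$, a continuous subharmonic function on $\tilde{\pr}_2^{-1}(\bD^*)$, and write $\la^+_{z_0}(t) = \max_{q \in \tilde{\pr}_2^{-1}(t)}\phi(q)$. Continuity is immediate from joint continuity of $\phi$ together with continuous dependence of the finite fibre on $t$. Subharmonicity reduces to the general statement that the fibrewise maximum of a continuous subharmonic function along a finite holomorphic surjection of Riemann surfaces is subharmonic: off the isolated critical values of $\tilde{\pr}_2$ the cover locally splits into holomorphic sections and one uses that the max of finitely many subharmonic functions is subharmonic, while at a critical value $t_1$ with uniformizer $\tau$ satisfying $t - t_1 = \tau^e$, the sub-mean-value inequality on a circle $|t-t_1|=r$ is obtained from the sub-mean-value inequality for $\phi$ on $|\tau|=r^{1/e}$ via the substitution $\psi = e\theta$. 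Since $\log|t|^{-1}$ is harmonic on $\bD^*$, the same subharmonicity and continuity transfer to $\la^+_{z_0}(t) - \la^+_{\na}(z_0)\log|t|^{-1}$, and the usual removable-singularity theorem for subharmonic functions extends it across the origin. \emph{The main subtle point} is the subharmonicity at the critical values of $\tilde{\pr}_2$ inside $\bD^*$, which is classical but requires a short computation; everything else amounts to careful bookkeeping of Puiseux exponents.
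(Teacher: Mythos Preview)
Your proposal is correct and follows essentially the same route as the paper: both arguments identify the pole orders of $\mu_n$ above $t=0$ with the non-Archimedean multipliers via Puiseux parametrizations, and both deduce subharmonicity of the fibrewise maximum by splitting into local sections away from branch points and then extending across them. The only cosmetic difference is that the paper handles the branch points of $\tilde{\pr}_2$ over $\bD^*$ by invoking the removable-singularity theorem for bounded continuous subharmonic functions, whereas you verify the sub-mean-value inequality there directly via the substitution $\psi=e\theta$; these are equivalent.
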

\begin{proof}
We argue locally near a parameter $t_0$. 
Over a small neighborhood $U$ of $t_0$, then 
$\tilde{\pr}^{-1}_2(U) = \sqcup_i \tilde{U}_i\subset \tilde{C}_{z_0}$
(and ${\pr}^{-1}_2(U) = \sqcup_i {U}_i\subset {C}_{z_0}$). 
Reducing $U$ if necessary, we can assume that
the restriction of the normalization map $\tilde{U}_i \to U_i$
is an isomorphism on $\tilde{U}^*_i:=\tilde{U}_i\setminus \pr_2^{-1}(t_0)$; 
that $\tilde{U}^*_i \to U^*:=U\setminus\{t_0\}$ is an unramified cover; 
and $0\notin U$ if $t_0\neq0$. 

Suppose first that $t_0\neq 0$. For each $t\in U^*$, 
define 
\[\la_i(t):=\max_{\tau\in\pr_2^{-1}(t) \cap U_i} \frac1n \log^+|\mu_n(\tau)|~.\]
The function $\log^+|\mu_n|$ is continuous and subharmonic
on $C_{z_0}$. 
Since the maps $U_i\to U$ are ramified covers, 
$\la_i$ is subharmonic on $U^*$ and continuous, hence
extends to $U$ as a continuous subharmonic function. 
This implies $\la^+=\max_i\la_i$ to be subharmonic and continuous on $U$ as required.

In the case $t_0=0$, then each $U_i$ corresponds to a branch $p_i$ of $C_{z_0}$.
Choose a parameter $\tau$ on $\tilde{U}_i$ such that
the normalization map is of the form
$\tau\mapsto (z(\tau), \tau^{q_i})$ for some $q_i\in\bN^*$,
The Puiseux series $\hat{z}_i= z(\tau^{1/q_i})$ determines the branch associated with $p_i$, and
on $\tilde{U}_i$, we can write
\[
\frac1n \log^+|\mu_n(\tau)|= 
 \frac{q_i}n \log^+|(df_{\na}^n)(\hat{z}_i)|\times
 \log|\tau|^{-1} + O(1)
 \]
Pushing this to $U$, we get 
the result.
\end{proof}

Suppose now that $\hat{z}\in\bP^1(\Puis)$ satisfies $f^n_{\na}(\hat{z})=\hat{z}$.  
We may suppose that $\hat{z}$ is determined by a formal Puiseux series vanishing at $0$, so that 
$\hat{z}(t)= \sum_{j\ge 1} a_j t^{j/q}$ with $q\in\bN^*$ and $a_j\in\bC$. 
The formal power series $\hat{z}(t^q)$ then satisfies a polynomial equation of degree $d+1$ whose coefficients lie
in the ring of holomorphic functions on the disk, which implies $t\mapsto\hat{z}(t^q)$ to be also holomorphic locally at $0$ by Artin approximation theorem~\cite{artin}.

We may thus attach to $\hat{z}$ the analytic curve 
\[
C_{\hat{z}}:=\{(\hat{z}(t^q),t^q), |t|<\eps\}
\]
for some $\eps>0$,  which is an open subset of $C_n$.

\section{Degenerating families with positive non-Archimedean Lyapunov exponent}

\subsection{Proof of Theorem~\ref{thm:main1}}
Suppose first that $\la(f_{\na})=0$. Then by~\cite[Théorème~4.4]{FRLend} we conclude to the existence of a constant $C>0$
such that  for any $0<|t|\le 1/2$, and any periodic point $z$ of period $n$ for $f_t$
then $|df_t^n(z)|^{1/n}\le C$. This proves that we are in Case (1). 

Suppose now that $\la(f_{\na})>0$, and pick any $\eps>0$. 
Let $A_1>0$ be the constant given by  Theorem~\ref{thm:main5}.
For each $n\in\bN$, denote by  $F_n$ the set of Puiseux series $\hat{z}$
such that $f_{\na}^n(\hat{z})=\hat{z}$ and $|df^n_{\na}(\hat{z})|\le A_1 e^{n\la(f_{\na})/2}$, so that
 $\# F_n \le \eps d^n$. 
 Pick  $0<\la<\la(f_{\na})/2$.

Choose any $n$, and let $r_n$ be a sufficiently small positive real number such that 
the second projection $\pr_2\colon C_n\to\bD$ is unramified over $\bD^*(r_n)$.
Let $\{C_i\}_{i\in I}$ be the irreducible components of  $\pr_2^{-1}(\bD^*(r_n))$. 
For each $i\in I$,  the projection $\pr_2\colon C_i\to \bD(r_n)$ is ramified only over $0$
hence $C_i$ is a punctured disk. If $q_i= C_i\cdot (t=0)$, then 
the curve $C_i$ can be parametrized by $q_i$ distinct Puiseux series
$\hat{z}^i_j$ such that $df_t^n(\hat{z}^i_j)$ are in the same orbit under the action of
the absolute Galois group of $\Lau$ hence have the same norm. 

Let us say that an index $i$ is good  if $|df_{\na}^n(\hat{z}^i_j)|\ge A_1e^{n\la(f_{\na})/2}$ for some (hence all) $j$. 
It follows that 
$\sum_{i \text{ good }} C_i\cdot (t=0)\ge (1-\eps) d^n$. 

Recall that $\la_n= \frac 1n \log^+|df_t^n(z,t)|$. 
By Lemma~\ref{lem:multi-na} applied to the family restricted over the disk of radius $r_n$, we get the existence of $A'_i>0$ such that
\[
\la_n \ge \left(\frac{\la(f_{\na})}2+\frac{\log A_1}n\right)\log|t|^{-1}  - A'_i
\]
on $C_i$ if $i$ is good. 
Reducing again $r_n$, we get
\[
\frac1{d^n}
\# \left\{z\in\bP^1(\bC), \, f^n_t(z)=z, \text{ and } \max\{1,|(f^n_t)'(z)|^{1/n}\}\ge A_+ |t|^{-\la}\right\}
\ge 1-\eps
\]
for all $|t|\le r_n$. Reducing $A_+$ if necessary, we may suppose this is true over $|t|\le1/2$, and the proof is complete.

\subsection{Stable families of polynomials}
A critical point of a family $f_t$ is a holomorphic map $t\mapsto c(t)$
such that $df_t(c(t))=0$. 
By making a suitable base change we can always assume that
we have $2d-2$ critical points for $f_t$ (at least on any disk $|t|<1-\eps$).

Recall from~\cite{berteloot-bifurcation} that a holomorphic family of rational maps $f_t$ is said to be stable
iff the family $\{f_t^n(c(t))\}_{n\in\bN}$ is normal for any critical point $c$.
This property is equivalent to say that for any irreducible component $C$ of $C_n = \{(z,t),\ f^n_t(z)=z\}$ 
such that $|df^n_{t_0}(z)|>1$ for some $(z,t_0)\in C$, then $|df^n_{t}(z)|>1$ for all $(z,t)\in C$. 

It follows that when the family is stable, then we can decompose $C_n$ into irreducible components
$C^+_i$ and $C^0_j$ such that $|df^n_{t}(z(t))|>1$ for all $(z,t)\in\cup_iC^+_i$, and
$|df^n_{t}(z(t))|\le 1$ for all $(z,t)\in\cup_jC^0_j$. 

\begin{thm} \label{thm:stable-uniform}
Let $\{f_t\}$ be any degenerating family of polynomial maps of degree $d\ge2$
such that $\la(f_{\na})>0$. 
Suppose that  the family $\{f_t\}$ is stable over $\bD^*$, and that
$f_{\na}$ has no recurrent critical points in $J(f_{\na})$. 

Pick any $0<\la<\la(f_{\na})/2$.
Then for any $\eps>0$ one can find a constant $C>1$ such that 
for all $n\in\bN^*$ large enough, we have
\[
\frac1{d^n}
\# \left\{z\in\bP^1(\bC), \, f^n_t(z)=z, \text{ and } |(f^n_t)'(z)|^{1/n}\ge C |t|^{-\la}\right\}
\ge 1-\eps
\]
all $|t|\le 1/2$.  
\end{thm}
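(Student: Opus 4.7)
The goal is to upgrade Theorem~\ref{thm:main1}(2) from an estimate valid on a parameter disk $|t|\le r_n$ depending on $n$ to an estimate valid uniformly on $|t|\le 1/2$. The strategy is to replace the component-by-component constants $A'_i$ that appear in the proof of Theorem~\ref{thm:main1} by a single constant $A'$ depending only on the family and on $\eps$, by using stability over $\bD^*$ together with Trucco's structural results for non-Archimedean polynomial dynamics with no recurrent critical points in the Julia set.

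First I would reproduce the setup of the proof of Theorem~\ref{thm:main1}(2): fix $\eps>0$, apply Theorem~\ref{thm:main5} to $f_{\na}$ to obtain $A_1>0$ such that for all $n$ large enough, the set $G_n$ of Puiseux series $\hat z$ with $f_{\na}^n(\hat z)=\hat z$ and $|df_{\na}^n(\hat z)|\ge A_1 e^{n\la(f_{\na})/2}$ has cardinality at least $(1-\eps)d^n$. Each $\hat z\in G_n$ determines an irreducible component $C_i$ of $\{(z,t): f_t^n(z)=z\}$ near the origin, with normalization $\tilde C_i$ parametrized by $\tau$ with $t=\tau^{q_i}$. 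By stability, the repelling condition $|df_t^n(z)|>1$ extends from a neighborhood of $t=0$ to the whole punctured disk $\bD^*$, so the multiplier $\mu_i(\tau)$ is holomorphic and non-vanishing on $\tilde C_i\simeq \bD^*$, and $\log|\mu_i|$ is a positive harmonic function on this punctured disk. By Lemma~\ref{lem:multi-na}, the function
\[
u_i(\tau):=\log|\mu_i(\tau)|-nq_i\la^+_{\na}(\hat z^i)\log|\tau|^{-1}
\]
extends harmonically across $\tau=0$.

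The heart of the argument is to prove the uniform estimate $u_i(\tau)\ge -nA'$ on $|\tau|\le (1/2)^{1/q_i}$, with $A'$ independent of $i$, $n$ and $\hat z\in G_n$. For this I would invoke Trucco's theorem: under the non-recurrence hypothesis on critical points of $f_{\na}$ inside $J(f_{\na})$, the Julia set admits a Markov partition on which $f_{\na}$ acts as a uniformly expanding map with uniformly bounded local degrees. This produces a uniform bound on the Puiseux ramification index $q_i$ of a periodic orbit, a uniform lower bound on the modulus of the leading Puiseux coefficient of $\mu_i$, and a Koenigs-type normal form controlling the error terms on a definite annulus in $\tau$. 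Pushing these through the analytic continuation of $\mu_i(\tau)$ from a neighborhood of $\tau=0$ to the whole disk (which is possible thanks to stability, as $\mu_i$ is holomorphic and non-vanishing on $\bD^*$) yields the required uniform bound by the maximum principle applied to the harmonic function $u_i$.

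Combining the pieces, for every good $\hat z\in G_n$ and every $|t|\le 1/2$,
\[
\frac{1}{n}\log|df_t^n(z_i(t))|\ge \la^+_{\na}(\hat z^i)\log|t|^{-1}-A'\ge \Bigl(\frac{\la(f_{\na})}{2}+\frac{\log A_1}{n}\Bigr)\log|t|^{-1}-A',
\]
which for any fixed $\la<\la(f_{\na})/2$ and $n$ large enough gives $|df_t^n(z)|^{1/n}\ge C|t|^{-\la}$ with $C=e^{-A'}$. Counting good components, at least $(1-\eps)d^n$ periodic points of period $n$ satisfy this estimate, which is the desired conclusion. The main obstacle lies in Step 3: extracting the uniform $L^\infty$ bound on $u_i$ from Trucco's results, which requires transferring his combinatorial/geometric description of $J(f_{\na})$ into analytic control over the complex family. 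The non-recurrence hypothesis is precisely what allows such a transfer, by ruling out the accumulation phenomena that would produce unbounded $A'_i$ as $n\to\infty$ in the general (possibly bifurcating) setting of Theorem~\ref{thm:main1}(2).
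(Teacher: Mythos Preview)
Your overall architecture is right: exploit stability so that $\log|\mu_i|$ is harmonic on the full punctured disk, use Trucco to bound the ramification indices $q_i$ uniformly in $n$, and then apply a maximum/minimum principle. But the heart of your argument, Step~3, does not go through as written.

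The claim that Trucco's theorem yields ``a uniform lower bound on the modulus of the leading Puiseux coefficient of $\mu_i$, and a Koenigs-type normal form controlling the error terms on a definite annulus'' is unjustified. Trucco's results are purely non-Archimedean: they control the \emph{valuation} of the multiplier (equivalently, the order of the pole of $\mu_i$ at $\tau=0$) and the combinatorics of periodic orbits in $J(f_{\na})$, but they say nothing about the complex leading coefficient $c_i$ in $\mu_i(\tau)=c_i\tau^{-k_i}+\cdots$. Even if you had $|c_i|\ge c>0$ uniformly, this would only give $u_i(0)\ge\log c$; a harmonic function on a disk with a lower bound at the center is not bounded below on an interior compact set without further input (think of $\mathrm{Re}(Nz)$ for $N$ large). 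So the maximum principle step fails as stated: you need boundary data, not central data.

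The paper supplies that boundary data by a separate \emph{complex-analytic} argument on the circle $|t|=\tfrac12$. Restricting the family to this circle and using stability, one gets a holomorphic motion of the Julia set over a neighborhood of the circle; the complex version of the repelling-cycle estimate (Theorem~\ref{thm:uniform-rep}) then provides a constant $B>1$ such that, for at least $(1-\eps)d^n$ of the continued periodic points, $|df_t^n|^{1/n}\ge B$ uniformly on $|t|=\tfrac12$. This is where Trucco's bound on the $q_i$ is actually used: since each component $C_i^+$ covers $\bD^*$ with degree $\le m$, discarding the $\le\eps d^n$ bad boundary points costs at most $m\eps d^n$ components. On the surviving components one now has $\tilde\la_n\ge\log B$ on the outer boundary and a pole of order $\ge\tfrac12\la(f_{\na})+\tfrac1n\log A$ at the origin, so the minimum principle on the annulus gives the uniform lower bound you were after.

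In short, your reduction is correct but the missing ingredient is not extractable from Trucco; it is a separate equidistribution estimate over the compact parameter set $|t|=\tfrac12$, obtained via holomorphic motion.
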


\begin{rem}
This result applies for instance on any branch of a special curve, i.e.,
on an algebraic curve $C$ in the moduli space containing infinitely many 
post-critical polynomials. Any branch which is sufficiently tangent to a branch of a special curve
will also satisfy the assumptions of the theorem.

We refer to~\cite{favre-gauthier} for a classification of special curves. 
\end{rem}

\begin{proof}
Choose any $\eps>0$,  and let $A$ be the constant given by Theorem~\ref{thm:main5}.

Pick any integer $n\in\bN^*$. 
Consider the set ${\mathfrak C}$ of all irreducible components 
$C_i^+$ of $C_n$. Since the differential $df^n_t(z,t)$ has norm $>1$, 
it cannot be equal to $1$ hence by the implicit function theorem
$C^+_i$ is smooth at any point over $\bD^*$, hence
$\pr_2\colon C^{+,*}_i\to \bD^*$ is a finite ramified cover
where $C^{+,*}_i= C_i^+\cap \pr_2^{-1}(\bD^*)$.
In particular, $C^+_i$ is connected.

We reduce the set ${\mathfrak C}$ so that 
for any (equivalently for all) Puiseux series $\hat{z}$ defining $C^+_i$, 
then $|df_{\na}^n(\hat{z})|\ge A e^{n\la(f_{\na})/2}$. 
Then Theorem~\ref{thm:main5} implies that 
\[\sum_i (C^+_i\cdot (t=0))\ge (1-\eps) d^n.\] 
Observe that any Puiseux series defining a component $C_i^+$ lies in the Julia set of $f_{\na}$.
Also $f_{\na}$ has no recurrent critical points, hence by Trucco~\cite[Theorem~F]{trucco} there exists 
an integer $m\in\bN^*$ such that
$(C_i^+\cdot(t=0))\le m$ for all $i$ (the point is that the constant $m$ is uniform in $n$). 
In other words, the map $\pr_2\colon C^+_i\to \bD^*$ is a finite cover of degree at most $m$
for all $i$. 

Now consider the family $g_\tau = f_{\frac12 e^{2i\pi \tau}}$ for $\tau\in[0,1]$. 
The total number of continuous functions $\tau \mapsto w(\tau)$ such that 
$g^n_\tau(w(\tau))=w(\tau)$ is equal to $d^n+O(1)$. By Theorem~\ref{thm:uniform-rep} below, 
we can find a constant $B>1$ such that 
at least $(1-\eps) d^n$ such functions  satisfy
$|(dg^n_\tau)(w(\tau))|^{1/n}\ge B$ for all $\tau$. 
It follows that removing at most $m\eps d^n$ curves from $\mathfrak{C}$, we can suppose that 
for each $C^+_i$, we have 
\[|(df^n_t)(z(t))|^{1/n}\ge B \text{ for all } (z,t)\in C^+_i \text{ and } |t| = \frac12~.\]
We have
 \[\sum_i (C^+_i\cdot (t=0))\ge (1-(m^2+1)\eps) d^n.\] 
Now the function $\tilde{\la}_n(z,t):=\frac1n \log |(df^n_t)(z(t))|$ is harmonic on $C^{+,*}_i$, 
satisfies $\tilde{\la}_n(z,t)\ge B>1$ over $|t|=1/2$, and is superharmonic on $C^+_i$
with a pole of order at least $\ge Ae^{\la(f_{\na})/2}$.
By the minimum principle, we get 
\[
\tilde{\la}_n(z,t) \ge \max\left\{\left(\frac12 \la(f_{\na})+ \frac1n \log A\right) \log|t|^{-1}, B\right\}
\]
which implies the theorem.
\end{proof}

\subsection{Existence of repelling cycles}
This section contains an auxiliary result used in the proof of the previous theorem.
We say that a real-analytic family of rational maps parame\-trized by the unit segment $[0,1]$
 is stable, if it can be extended to a stable holomorphic family on a neighborhood of $[0,1]$. 

\begin{thm}\label{thm:uniform-rep}
Let $g_\tau$ be a stable real-analytic family of rational maps of degree $d\ge2$ parametrized by the 
segment $\tau\in[0,1]$. 
Then for all $\eps>0$, there exists a constant $B>1$ such that 
for all $n\in\bN^*$, we have
\[
\frac1{d^n}
\# \left\{w\colon [0,1]\mathop{\rightarrow}\limits^{\cC^\omega} \bP^1_\bC, \, g^n_\tau(w(\tau))=w(\tau), \text{ and } \inf_{[0,1]} |dg^n_\tau(w(\tau))|^{1/n}\ge B\right\}
\ge 1-\eps~.
\]
\end{thm}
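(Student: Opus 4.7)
The plan is to combine the complex version of Theorem~\ref{thm:main5} (valid over $\bC$ by Remark~\ref{rem:also-complex}) with $J$-stability and Harnack's inequality, in order to pass from a pointwise multiplier estimate at one parameter to a uniform estimate on the whole of $[0,1]$.

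First I extend $(g_\tau)_{\tau\in[0,1]}$ to a stable holomorphic family, still denoted $(g_\tau)_{\tau\in V}$, parametrized by a bounded simply connected open neighborhood $V\subset\bC$ of $[0,1]$. The Margulis--Ruelle inequality gives $\la(g_\tau)\ge\tfrac12\log d>0$ for every $\tau\in V$. I fix a base parameter $\tau_0\in[0,1]$, set $\la_0 := \la(g_{\tau_0})>0$, and let $C>0$ denote the Harnack constant for the pair $([0,1],V)$: for every positive harmonic function $u$ on $V$ one has $u(\tau)\ge u(\tau_0)/C$ for all $\tau\in[0,1]$, with $C$ depending only on the geometric pair $([0,1],V)$. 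The target constant is $B := \exp(\la_0/(3C))>1$.

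Fix $\eps>0$ and apply Theorem~\ref{thm:main5} over $\bC$ to the single map $g_{\tau_0}$: there exist $A>0$ and $N_0\in\bN$ such that for every $n\ge N_0$ the set $S_n$ of periodic points $z$ of $g_{\tau_0}$ of period dividing $n$ satisfying $|dg_{\tau_0}^n(z)|\ge A\,e^{n\la_0/2}$ has cardinality at least $(1-\eps)d^n$. Each $z\in S_n$ extends, via the holomorphic motion of the Julia set produced by $J$-stability, to a holomorphic (in particular real-analytic) section $\tau\mapsto z(\tau)$ on $V$ with $g_\tau^n(z(\tau))=z(\tau)$. The multiplier $\mu_n(\tau) := dg_\tau^n(z(\tau))$ is holomorphic on $V$, and by the characterization of $J$-stability recalled in the excerpt just before Theorem~\ref{thm:stable-uniform}, its modulus stays strictly above $1$ throughout $V$. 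Hence $u_n := \log|\mu_n|$ is a positive harmonic function on $V$, Harnack applies, and
\[
|dg_\tau^n(z(\tau))|^{1/n} \;\ge\; \exp\!\Bigl(\tfrac{\la_0}{2C}+\tfrac{\log A}{nC}\Bigr) \;\ge\; B
\]
for every $\tau\in[0,1]$ and every $z\in S_n$, once $n$ is large enough. The finitely many values $n<N_0$ are dispatched by replacing $B$ with a smaller constant if necessary: for each such $n$ only finitely many multiplier functions arise on $[0,1]$, and those whose modulus exceeds $1$ admit by compactness a strictly positive infimum, which yields a uniform lower bound $>1$.

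The main obstacle, and the reason stability enters essentially, is the transfer of the pointwise estimate at $\tau_0$ into a uniform estimate over all of $[0,1]$. Without stability one could imagine $|\mu_n|$ dipping close to $1$ somewhere in $[0,1]$, ruining the estimate. $J$-stability provides both the global holomorphic motion of periodic points and the positivity $|\mu_n|>1$ on $V$; Harnack then delivers a constant $C$ depending only on the geometric pair $([0,1],V)$, hence independent of $n$ and of the branch $z\in S_n$, which is exactly what is needed to upgrade a pointwise estimate to a uniform one.
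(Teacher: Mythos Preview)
Your proof is correct and follows the same overall architecture as the paper's (extend to a stable holomorphic family on a simply connected neighborhood $V$ of $[0,1]$, apply the complex version of Theorem~\ref{thm:main5} at a base parameter, follow the repelling periodic points via the holomorphic motion), but the key transfer step is genuinely different.

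The paper transfers the multiplier estimate from the base parameter to all of $[0,1]$ by invoking the fact that the holomorphic motion $\phi(t,\cdot)$ is quasiconformal, hence H\"older with a uniform exponent $0<\kappa<1$ on compact subsets (citing~\cite{hubbard}), and then uses that H\"older conjugacies distort multipliers by at most a fixed power. You instead observe that $\log|\mu_n|$ is a \emph{positive harmonic} function on $V$ (holomorphic multiplier, modulus $>1$ by stability) and apply Harnack's inequality. Your route is more elementary and more transparent: the Harnack constant visibly depends only on the pair $([0,1],V)$, whereas the paper's argument requires the quasiconformal theory of holomorphic motions and leaves the passage from H\"older conjugacy to the multiplier inequality implicit. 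Both approaches yield a constant independent of $n$ and of the branch, which is the crux.

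One minor point, shared with the paper: your handling of the finitely many $n<N_0$ by shrinking $B$ guarantees that every repelling branch satisfies the bound, but does not by itself guarantee the \emph{count} $\ge(1-\eps)d^n$ for those small $n$ and arbitrarily small $\eps$ (e.g.\ if several fixed points are non-repelling). The paper's proof does not address this either; in practice the statement should be read as holding for all $n$ sufficiently large, which is all that is used in the proof of Theorem~\ref{thm:stable-uniform}.
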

\begin{rem}
We think that the previous statement is true for any continuous family of rational maps without any assumption on stability. 
\end{rem}

\begin{proof}
Since the family is real-analytic, $g_t$ is the restriction of a holomorphic family defined on a small open neighborhood of $[0,1]$, 
which we may suppose to be simply connected. 
We may thus suppose that $g_t$ is in fact a holomorphic family parameterized by the unit disk $|t|<1$, and 
we are given a compact subset $K\subset \bD$ (corresponding to the image of the unit segment under the uniformization map). 
To simplify notation let $J_t$ be the Julia set of $g_t$. 

As $g_t$ is stable, there exists a holomorphic motion $\phi\colon \bD\times J_0\to \bP^1(\bC)$
such that $\phi(t,J_0)= J_t$ and
\[\phi(t,g_0(z))=g_t(\phi(t,z))\] 
for all $t$ and $z$. 
It follows that all periodic points in $J_0$ can be followed under the holomorphic motion. 
The number of non-repelling periodic points of $g_0$ is finite, say equal to $M$. For each integer $n$ we get 
$d^n-M$ distinct holomorphic functions $w_i(t)\in J_t$ such that $g_t^n(w_i(t))=w_i(t)$. 

Fix $\eps>0$. By Theorem~\ref{thm:main5} (see Remark~\ref{rem:also-complex}) there exists a constant $B>1$ such that 
$g_0$ has at least $(1-\eps)d^n$ periodic points $w_i$ whose multiplier
 satisfies $|dg^n_0(w_i)|\ge B^n$. 
 
 On a compact set  $K\subset \bD$, the maps $\phi(t,\cdot)$ are  Hölder with
 a uniformly bounded exponent $0<\kappa<1$, see~\cite[Corollary~4.4.8]{hubbard}. It follows that for any $t\in K$, we get 
$|dg^n_t(w_i(t))|\ge |dg^n_0(w_i)|^{\kappa n}$, hence the result. 
\end{proof}

\section{Existence of  period $2$ and $3$ rigid repelling cycles}

\subsection{Polynomial maps}
In this section, we prove the following result which was communicated to the author by V.~Huguin. 
The proof we give streamlines~\cite{huguin1}.
\begin{thm}\label{thm:period2}
Let $f$ be any polynomial of degree $d\ge2$ defined over $\Lau$
which has not potential good reduction. 

Then $f$ admits a rigid repelling periodic point of period $\le2$, 
and a repelling rigid fixed point if $d\in\{2,3\}$.
\end{thm}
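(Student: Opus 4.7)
The plan is to proceed by contradiction: suppose $f$ is a polynomial over $\Lau$ of degree $d\ge 2$ without potential good reduction, yet every rigid periodic point of period $\le 2$ is non-repelling. I will show this forces $f$ to have good reduction after an affine change of coordinates, contradicting the hypothesis. For $d\in\{2,3\}$ I will obtain the stronger conclusion that a \emph{fixed} repelling rigid point exists, using only the fixed-point multipliers.

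First I would normalize: after a suitable finite base change and affine conjugation over $\bL$, one may assume $f(z)=z^d+a_{d-2}z^{d-2}+\cdots+a_0$ is monic and depressed, so that good reduction is equivalent to $|a_i|\le 1$ for every $i$. The central algebraic input is the identity
\[
\prod_j g'(\zeta_j)=(-1)^{n(n-1)/2}\,\mathrm{Disc}(g)
\]
valid for any monic polynomial $g(z)=\prod_j(z-\zeta_j)$ of degree $n$, which I would apply to $g=f(z)-z$ for the small-degree cases and to $g=f^2(z)-z$ for the general case.

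For $d=2,3$ the computation is direct. When $d=2$, $\mathrm{Disc}(z^2-z+c)=1-4c$, so the non-repelling assumption on both fixed points gives $|c|\le 1$. When $d=3$, writing $f=z^3+pz+q$, Vieta's relations for $z^3+(p-1)z+q=0$ combined with $\mu_i=3z_i^2+p$ yield
\[
\sum_i\mu_i=3(2-p),\qquad \prod_i\mu_i=4p^3-12p^2+9p+27q^2.
\]
Since $|3|=1$ in residual characteristic zero, the bound $|e_1(\mu)|\le 1$ forces $|p|\le 1$, and then $|e_3(\mu)|\le 1$ forces $|q|\le 1$. In each case one obtains good reduction, contradicting the no-PGR hypothesis and thereby exhibiting a repelling rigid fixed point.

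For general $d\ge 2$ and the period-$\le 2$ statement, I would apply the discriminant identity to $g=f^2(z)-z$, factored as $(f(z)-z)\cdot h(z)$ with $h$ monic of degree $d^2-d$ whose roots are the strict period-two rigid points, grouped in pairs $\{a_k,b_k\}$ of common $f^2$-multiplier $\nu_k$. The identity becomes
\[
\prod_i(\mu_i^2-1)\cdot\prod_k(\nu_k-1)^2=\pm\,\mathrm{Disc}(f^2(z)-z),
\]
and the non-repelling hypothesis forces $|\mathrm{Disc}(f^2(z)-z)|\le 1$. The hard part will be a counting argument showing that $\mathrm{Disc}(f^2-z)$, viewed as a polynomial in the coefficients $(a_{d-2},\ldots,a_0)$, cannot have norm $\le 1$ unless every $|a_i|\le 1$: I would identify its top-degree monomial with respect to a suitably chosen term order on the $a_i$, observe that its coefficient is a nonzero integer (hence of norm one in residual characteristic zero), and conclude that no non-Archimedean cancellation can hide an $a_j$ of norm larger than one. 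This yields good reduction and completes the contradiction.
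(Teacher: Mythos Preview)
Your treatment of the cases $d=2,3$ is correct and pleasantly different from the paper's argument: you work algebraically with the symmetric functions of the fixed-point multipliers in the monic depressed normal form, while the paper argues geometrically on the Berkovich tree by counting critical points in the closed balls bounded by the type~II fixed points of local degree $\ge 2$. Both routes are short in low degree; yours has the advantage of being entirely elementary, the paper's has the advantage of generalising.

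For general $d$, however, your plan contains a genuine gap, and in fact the intermediate implication you aim to prove is \emph{false}. You reduce the period-$\le 2$ statement to the claim that
\[
|\mathrm{Disc}(f^2(z)-z)| \le 1 \quad\Longrightarrow\quad |a_i|\le 1 \text{ for all } i,
\]
and propose to establish this by isolating a leading monomial. But consider $f(z)=z^4+cz^2+z$ with $|c|>1$. This is monic and depressed, and $|a_2|=|c|>1$, so $f$ does not have potential good reduction. Yet $f(z)-z=z^2(z^2+c)$ has a double root at $0$ (the fixed point $0$ is parabolic with multiplier $1$), hence $f^2(z)-z$ has a multiple root at $0$ and $\mathrm{Disc}(f^2(z)-z)=0$. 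So the discriminant bound alone cannot force the coefficients into the unit disk: a single nearly-parabolic periodic point can drive the discriminant to zero while repelling points (here $\pm\sqrt{-c}$, with multiplier of norm $|c|^{3/2}>1$) sit elsewhere. Any argument for general $d$ must use more of the multiplier data than the single product $\prod(\mu_i^2-1)\prod(\nu_k-1)^2$.

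The paper's proof avoids this by never passing through the discriminant. It works on the Berkovich line: each non-repelling rigid fixed point determines a type~II fixed point $y_i$ in the Julia set with $\deg_{y_i}(f)\ge 2$, and the closed ball $\bar B(y_i)$ contains exactly $\deg_{y_i}(f)$ rigid fixed points and $\deg_{y_i}(f)-1$ critical points. Since $f$ lacks potential good reduction, at least one critical point escapes to $\infty$, so $\sum_i(\deg_{y_i}(f)-1)\le d-2$; combined with $\sum_i\deg_{y_i}(f)=d$ this already forces $d\ge 4$ if no fixed point is repelling. For the period-$2$ statement the paper repeats the count with the type~II $2$-cycles $\{z_j,f(z_j)\}$ of local degree $\ge 2$, obtaining two inequalities relating the local degrees at the $y_i$ and $z_j$ that are shown to be incompatible by an elementary induction. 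If you want to salvage an algebraic approach, you would need to exploit the full list of elementary symmetric functions of the $\mu_i$ and $\nu_k$, not merely their product.
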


The argument works for any tame polynomial map over any field in the sense of~\cite{trucco}. 

\begin{proof}
Suppose that $x$ is a rigid fixed point which is not repelling. 
The segment $[x,\infty]$ intersects the Julia set of $P$ at 
a point $\hat{x}$ which is a fixed type II point lying in the Julia set. 
We also have $\deg_{\hat{x}}(f)\ge2$, since otherwise the degree
function would be $1$ in a small neighborhood of $\hat{x}$ and
an open neighborhood of $\hat{x}$ in $[x,\infty]$ would be fixed.

Consider the closed ball $\bar{B}(\hat{x})\subset\bA^1$ whose boundary  is equal to $\hat{x}$. 
Since the residual characteristic of the ground field is $0$, the number of fixed point $F(\hat{x})$ (counted with mulitplicity) of $f$
inside $\bar{B}(\hat{x})$ is equal to $\deg_{\hat{x}}(f)$, and the number of critical points
(again counted with multiplicity) of $f$ in $\bar{B}(\hat{x})$ is equal to $\deg_{\hat{x}}(f) -1$.

\smallskip

Suppose that no rigid fixed point of $f$ is repelling. 
Let $y_1, \cdots , y_n$ be the set of type II fixed points of $f$
with local  degree $\ge2$, and let $N_1$ be the number of rigid critical
points lying in the union  of the balls $\bar{B}(y_i)$ (counted with mulitplicity). 
Since $\deg_{y_i}(f)\ge2$, we have 
$\deg_{y_i}(f) -1\ge \frac12\deg_{y_i}(f)$.
Also since $f$ has not potential good reduction, at least one rigid critical point
escapes to infinity, and thus $N_1\le d-2$. 
By what precedes, we get
\[
d-2 \ge N_1:= \sum_{i=1}^n (\deg_{y_i}(f)-1) 
\ge \frac12  \sum_{i=1}^n \deg_{y_i}(f)=\frac{d}2~.\]
When $d\le3$, we obtain a contradiction hence the existence of a rigid repelling fixed point. 
Note that we also get $n\ge2$.
\smallskip

Suppose now that $d\ge4$ and $f$ has no rigid repelling fixed and periodic $2$ point. 
To fix terminology, let us say that a $2$-cycle is a pair of distinct points
that is permuted by $f$.
The number of rigid $2$-cycles of a polynomial of degree $\delta$
is equal to $\frac12 (\delta^2-\delta)$. 

Consider as above the set $\{y_1,\cdots, y_n\}$ of all type II fixed points with local degree $\ge2$,
and write  $\delta_i=\deg_{y_i}(f)$.
Let  $\{\{z_1,f(z_1)\},\cdots, \{z_m,f(z_m)\}\}$ be the set of all type II $2$-cycles having local degree $\ge2$, and
write $\mu_i^+=\deg_{z_i}(f)$ and $\mu^-_i=\deg_{f(z_i)}(f)$. We shall suppose that 
$\mu^+_i\ge\mu^-_i$. 

The number of rigid fixed points in the union of the balls $\cup \bar{B}(y_i)$
is equal to 
\[\sum_{i=1}^n \delta_i= d\]
by our assumption that there is no repelling rigid fixed points. 
The number of $2$-cycles in the union of the balls $\cup \bar{B}(y_i)$ is equal to 
\[\sum_{i=1}^n \frac12 (\delta_i^2-\delta_i)~.\]
It follows that number of rigid $2$-cycles in the union of the balls $\cup \bar{B}(z_j) \cup \bar{B}(f(z_j))$
is equal to $\sum_{i=1}^m \mu_i^+ \mu_i^-$. 

Since no rigid $2$-cycle is repelling, we obtain 
\begin{equation}\label{eq:gez}
\sum_{i=1}^m \mu_i^+ \mu_i^-
= 
\frac12 (d^2-d)
- 
\sum_{i=1}^n \frac12 (\delta_i^2-\delta_i)
=
\sum_{i< j} \delta_i\delta_j
\ge 4{n\choose 2}= 2n(n-1)~.
\end{equation}
The number of critical points  lying in $\cup \bar{B}(y_i)$ is equal to 
\[\sum_{i=1}^n (\delta_i-1)= d-n~,\]
and there are 
\[
\sum_{i=1}^m (\mu_i^++\mu^-_i-2)
\]
critical points lying in $\cup \bar{B}(z_j) \cup \bar{B}(f(z_j))$.
It follows that 
\begin{equation}\label{eq:lez}
\sum_{i=1}^m (\mu_i^++\mu^-_i-2)\le (d-1) - (d-n) = n-1
~.
\end{equation}
We shall prove that~\eqref{eq:lez} implies 
$\sum_{i=1}^m \mu_i^+ \mu_i^-<2n(n-1)$ which yields a contradiction. 
Recall that by assumption $\mu_i^+\ge2$ and $\mu^+_i\ge\mu^-_i\ge1$.

When $n=2$, then $m=1$ and $\mu^+_1=2$, $\mu^-_1=1$, so that 
$\mu_1^+ \mu_1^-= 2<4=2n(n-1)$, as required. 

Suppose now that $n\ge3$. We argue by induction on $m$. 
When $m=1$, then 
\[
\mu_1^+ \mu_1^-
\le
\frac12(\mu_1^++\mu^-_1)^2
\le 
\frac12 (n+1)^2 < 2n(n-1)
~,\]
as was to be shown. 
Suppose that the result is proved for some $m\ge 1$, and 
pick $\mu_i^{\pm}$ with $i\in\{0,1, \cdots, m\}$
positive integers satisfying  $\mu_i^+\ge2$ and $\mu^+_i\ge\mu^-_i\ge1$, and~\eqref{eq:lez}. 
Set $\mu^+= \mu^+_0+\mu^-_1-1$ and $\mu^-=\mu^+_1+\mu^-_0-1$. 
Up to permuting indices we may suppose that $\mu^+\ge \mu^-\ge2$. 
Since 
\[
(\mu^++\mu^--2)
+
\sum_{i=2}^m (\mu_i^++\mu^-_i-2)
=
\sum_{i=0}^m (\mu_i^++\mu^-_i-2)
\le 
n-1
\] 
the induction step applies, and we get
\[
\sum_{i=0}^m \mu_i^+ \mu_i^-
= 
(\mu^+_0 \mu^-_0
+\mu^+_1\mu^-_1
- \mu^+\mu^-) +  \mu^+\mu^-
+ \sum_{i=2}^m \mu_i^+ \mu_i^-
< \Delta + 2n(n-1)
~,\]
where
\begin{align*}
\Delta &= 
\mu^+_0 \mu^-_0
+\mu^+_1\mu^-_1
- \mu^+\mu^-
\\
&= 
\mu^+_0 \mu^-_0
+\mu^+_1\mu^-_1
- (\mu^+_1+\mu^-_0-1)
(\mu^+_0+\mu^-_1-1)
\\
&=
- \mu^+_1(\mu^+_0-1)
- \mu^-_0(\mu^-_1-1)
+ \mu^+_0+\mu^-_1- 1
\end{align*}
When $\mu^-_1=1$, then 
$\Delta \le
- \mu^+_1(\mu^+_0-1) + \mu^+_0\le0$; and when $\mu^-_1\ge2$, then 
we get 
\begin{align*}
\Delta 
&=- \mu^+_1(\mu^+_0-1)
- \mu^-_0(\mu^-_1-1)
+ \mu^+_0+\mu^-_1- 1
\\&\le  - \mu^+_1(\mu^+_0-1) - 1 + \mu^+_0+\mu^-_1- 1
\le  (1-\mu^+_1)(\mu^+_0-1) -1+ \mu^+_1 \le0
\end{align*}
as well. 
This concludes the proof.
\end{proof}


\subsection{Existence of a period $3$ repelling cycle for cubic maps}
Recall that a cubic rational map over $\Lau$ which has not potential good reduction
has necessarily a positive Lyapunov exponent (since any affine Bernoulli map has degree $\ge4$, see~\S\ref{sec:lyap}).
And Theorem~\ref{thm:main5} shows that most rigid periodic cycles are repelling. 
In fact, we have:
\begin{thm}\label{thm:period3}
Suppose that $f$ is a rational map of degree $3$ defined over $\Lau$
which has not potential good reduction.

Then $f$ admits a repelling rigid periodic cycle of period at most $3$.
\end{thm}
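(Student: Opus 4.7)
The plan is to argue entirely non-Archimedeanly over $\bL$, by contradiction, extending both the degree (from $d=2$ to $d=3$) and the period (from $1$ to $\le 3$) compared with Theorem~\ref{thm:period2}. First I reduce to $\la(f)>0$: since $\deg f=3<4$, inequality~\eqref{eq:bdd-bernoulli} forbids $f$ from being affine Bernoulli, so combined with the hypothesis of no potential good reduction, Theorem~\ref{thm:main6} yields $\la(f)>0$ and hence infinitely many rigid repelling periodic cycles by Theorem~\ref{thm:main5}; the task is to bound their period.

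Assume for contradiction that every rigid periodic point of period $\le 3$ is non-repelling. Adapting the reduction argument opening the proof of Theorem~\ref{thm:period2} (replacing the role of $\infty$ by an arbitrary non-repelling cycle), each such cycle of period $n\in\{1,2,3\}$ is supported by a type II periodic point $y$ of period dividing $n$ with $\deg_y(f^n)\ge 2$. This produces three finite families with local degrees $\delta_i\ge2$ at fixed points $y_i$, $(\mu_j^+,\mu_j^-)$ at $2$-cycles $\{z_j,f(z_j)\}$, and an analogous triple at $3$-cycles $\{w_k,f(w_k),f^2(w_k)\}$.

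The key local input at each such type II $n$-periodic point $y$ with $\Delta:=\deg_y(f^n)\ge 2$ is that the tangent map $T_y f^n$ is a rational map of degree $\Delta$ on the residual $\bP^1$: its $\Delta+1$ fixed directions carry the rigid fixed points of $f^n$ lying in the corresponding subtrees together with further type II periodic points of local degree $\ge 2$, while its $2\Delta-2$ critical directions account for critical points of $f^n$ at $y$. Combining the resulting Riemann--Hurwitz bookkeeping with the global identity $\sum_{x\in\bP^1(\bL)}(\deg_x f-1)=2d-2=4$ and with its iterates for $f^2$ and $f^3$, constrains the possible data $(\delta_i,\mu_j^\pm,\nu_k^\pm)$ tightly.

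The contradiction is then extracted by case analysis: one first rules out $\delta_i=3$ (which would force potential good reduction at $y_i$, against hypothesis), and then examines the remaining configurations. In each case, accommodating all $4$ fixed rigid points, all $3$ formal rigid $2$-cycles, and all $8$ formal rigid $3$-cycles as non-repelling forces more critical directions than the budget of $4$ permits, by an inequality of the same structure as~\eqref{eq:gez}--\eqref{eq:lez} in the proof of Theorem~\ref{thm:period2}. The main obstacle, compared to the polynomial case, is that the absence of a canonical totally invariant point allows the tree spanned by the type II periodic points to admit several combinatorial patterns once periods $2$ and $3$ are admitted; carrying out this classification for cubic rational maps is precisely the investigation that the introduction says this paper ``initiates'' in the vein of Kiwi's quadratic work.
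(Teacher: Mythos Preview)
Your proposal is not a proof but a plan, and it diverges substantially from what the paper actually does. The paper's argument is not a counting argument in the style of Theorem~\ref{thm:period2}. Instead, it fixes one type~II fixed point $x$ of local degree $2$ (guaranteed by Rivera-Letelier's theorem), considers its unique other preimage $x'$, and then performs an explicit geometric case analysis on the segment $[x,x']$, the critical tree $\cC_f$, and the tangent maps $Tf$. In each of nine cases (labelled 1a--1e and 2a--2d) the paper locates, by hand, open balls $B(\vec w)\Subset f^k B(\vec w)$ for some $k\le 3$ and invokes the Schwarz lemma / Rivera-Letelier fixed-point theorem to produce a rigid repelling $k$-periodic point. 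No global enumeration of type~II cycles, and no inequality of the form~\eqref{eq:gez}--\eqref{eq:lez}, appears.

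Your plan has two concrete gaps. First, the reduction step ``each non-repelling rigid $n$-cycle is supported by a type~II $n$-periodic point $y$ with $\deg_y(f^n)\ge 2$'' does not follow from the polynomial argument as you claim: that argument used the segment $[x,\infty]$ with $\infty$ totally invariant, and ``replacing $\infty$ by an arbitrary non-repelling cycle'' gives no reason for the resulting segment to meet $J_f$ at a periodic type~II point of the right period and local degree. Second, and more seriously, you never carry out the promised case analysis; your last sentence effectively defers it back to the paper, which is circular. The counting framework you sketch may be salvageable, but making it work for rational maps (where there is no canonical ``outside'' ball and the type~II periodic points need not bound disjoint balls containing the rigid cycles) would require genuinely new arguments that you have not supplied.
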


As above, the argument works for any tame cubic rational map over any field.

\begin{proof}[Proof of Theorem~\ref{thm:period3}]
Let $x$ be a type II point. A direction at $x$ is a connected component of $\bP^{1,\an}_{\Lau}\setminus\{x\}$. 
The set of directions $T_x\bP^{1,\an}_{\Lau}$ at a type II point $x\in\bP^{1,\an}_{\Lau}$ is
in canonical bijection with $\bP^1(\bC)$. Given any $\vv\in T_x\bP^{1,\an}_{\Lau}$, we denote by $B(\vv)$
the open ball corresponding to $\vv$. Its boundary is $x$. Given any two type II points $x\neq x'$, we also 
let $A_{x,x'}$ be the unique connected component of $\bP^{1,\an}_{\Lau}\setminus\{x,x'\}$ whose boundary 
is equal to $\{x,x'\}$.

Any rational map $f$ mapping 
$x$ to $x'$ induces a map on the set of directions
\[Tf(x)\colon T_x\bP^{1,\an}_{\Lau}\to T_{x'}\bP^{1,\an}_{\Lau}\]
which is identified to a a rational map over $\bC$ of degree $=\deg_x(f)$, see~\cite[Theorem~7.34]{benedetto-book}. 

We shall make a frequent use of~\cite[Theorem~9.34]{baker-rumely}. If the local degree of $f$
remains constant on an open segment $(x,x')\subset\bP^{1,\an}_{\Lau}$,
then $f$ is injective on $[x,x']$.

Also, a result of Faber~\cite[Proposition~6.9]{faber1} states that the critical set
\[\cC_f=\left\{x\in \bP^{1,\an}_{\Lau},\ \deg_x(f)\ge2\right\}\]
is a tree whose endpoints are rigid critical points (and none of its component is reduced to a singleton).

\smallskip

Suppose that $f$ has not potential good reduction and does not contain any repelling
rigid fixed point.  By a theorem of Rivera-Letelier, see~\cite[Theorem~12.5]{benedetto-book} (or~\cite{rivera-periode})
then $f$ admits at least one fixed type II point $x$ for which $\deg_x(f)\ge2$. 
Denote by $x'$ the other preimage of $x$, and observe that since $\deg(f)=3$
the rational map $Tf(x')$ has degree $1$, and $Tf(x)$ has degree $2$.
Let $\vv$ be the tangent direction at $x$ pointing toward $x'$. 
Observe that the open annulus $A_{x,x'}$ between $x$ and $x'$ is mapped
properly by $f$ onto the ball $B(Tf(x)\cdot\vv)$. 
The degree of the map $f\colon A_{x,x'}\to B(Tf(x)\cdot\vv)$ is either $2$ or $3$; 
and the critical set $\cC_f$ of $f$ intersects the open segment $(x,x')$
as $f$ is not injective on $[x,x']$.

\smallskip

\paragraph*{Case 1a} Suppose first that $Tf(x)\cdot\vv\neq\vv$. 
Since the direction at $x'$ pointing toward $x$ is mapped to $Tf(x)\cdot\vv$, 
there exists a direction $\vv'\in Tx'$ which is mapped to $\vv$. 
It follows that $f\colon B(\vv')\to B(\vv)$ is an analytic isomorphism and since $ B(\vv')\Subset B(\vv)$
there exists a rigid repelling fixed point in $B(\vv')$ by Theorem~\cite[Theorem~12.5]{benedetto-book}.

\medskip

Suppose next that $Tf(x)\cdot\vv=\vv$. 

\paragraph*{Case 1b}
If $\vv$ is not critical  for $Tf(x)$, then it exists a (unique) $\ww\neq \vv$ 
which is not critical for $Tf(x)$ and mapped to $\vv$. Since $Tf(x')$ has degree $1$, 
there exists also a unique tangent direction $\ww'$ at $x'$ such that $Tf(x')\cdot\ww'=\ww$. 
This direction does not point towards $x$ because this one is mapped to $\vv$. 
Hence the open ball $B(\ww')$ is mapped one-to-one onto $B(\ww)$ 
which is mapped one-to-one also to $B(\vv)$. 
We get a map $f^2\colon B(\ww')\to B(\vv)$ such that  $B(\ww') \Subset B(\vv)$, 
hence a rigid
repelling period $2$ point in $B(\ww')$ as before. 

\medskip

Suppose now that  $\vv$ is critical for $Tf(x)$, and $\cC_f\cap [x,x']=[x,y]$ is connected.
Observe that $f$ is injective on both segments $[x,y]$ and $[y,x']$, 
hence $\deg_y(f)=3$. It follows that $\cC_f$ is connected, and can only branch at $y$.  
The tangent vector $\vv_0$ at $y$
pointing towards $x'$ is mapped with degree $1$ to the
tangent vector $\vv_1$ at $f(y)$ pointing to $x$, and  $f\colon B(\vv_0)\to B(\vv_1)$
is an analytic isomorphism.

\medskip

\paragraph*{Case 1c}
When $y$ does not belong to the interior of the segment $[x,f(y)]$, then 
$B(\vv_0)\Subset B(\vv_1)$ hence $f$ contains a rigid repelling fixed point in $B(\vv_0)$.

\medskip

Otherwise, we have $[x,f(y)] \cap [x,x']= [x,y_*]$ with $x<y<y_*<x'$. 

\medskip

\paragraph*{Case 1d}
Suppose $y_*\neq f(y)$. There exists 
a point $y'_*\in(x,y)$ such that $f(y'_*)=y_*$. 
Observe that $\deg(y'_*)=2$, and  the directions
$\vv(x), \vv(x')$ at $y'_*$ pointing towards $x$ and $x'$
respectively are both critical.
Let $\vv_*$ be the direction at $y_*$  pointing to $x$. 
Then $\vv(x)$ is mapped to the direction at $y_*$ pointing to $x$ with degree $2$, and
$\vv(x')$ is mapped to the direction at $y_*$ pointing to $f(y)$. The latter direction
 is not $\vv_*$ since by assumption is not $f(y)\neq y_*$. 
We conclude that there exists a direction $\vv'_*$ at $y'_*$
which is mapped one-to-one to $\vv_*$. 
The image of $y_*$ lies in the segment $[f(y),x]$ at distance $d_\bH(y,y_*)$
of $f(y)$, hence $f(y_*)\in [y,f(y)]$. Moreover the direction $Tf(f(y_*))\cdot \vv_*$ points
towards $x$, hence $B(Tf(f(y_*))\cdot \vv_*)$ contains $B(\vv'_*)$. 
Observing that $f^2\colon B(\vv'_*)\to B(Tf(f(y_*))\cdot \vv_*)$, we get 
 a rigid repelling period $2$ point in $B(\vv'_*)$.

 \medskip

\paragraph*{Case 1e}
It remains to treat the case $f(y)\in[x,x']$.
Observe that in this case $f^2(y)=y$
because $f$ doubles the length on $[x,y]$ and preserves it on $[y,x']$.


Let $y_1$ be the preimage of $y$ in the segment $(x,y)$. 
Since $f$ maps injectively the segment $[f(y),x']$ onto $[y,x]$, 
$y_1$ admits a preimage $y_2\in [f(y),x']$. 
Finally both directions at $y$ pointing to $x$ and $x'$ are mapped to the direction at $f(y)$ pointing to $x$, hence
there exists a direction $\ww$ at $y$ mapped to the direction at $f(y)$ pointing to $x'$. 
In particular, we may find a preimage $y_3$ of $y_2$ in the ball $B(\ww)$ such that $Tf(y_3)$ 
maps the direction pointing to $y$ to the one pointing to $f(y)$.

Observe that $\deg(Tf(y_1))=2$
and the critical directions are the one pointing to $x$ and $x'$. 
It follows that there are two non critical directions $\{\ww_1,\ww'_1\}$ 
at $y_1$ mapped to $\ww$. 
Take any direction $\ww_2$ (resp. $\ww'_2$) at $y_2$
mapped to $\ww_1$ (resp. $\ww'_1$). 
These directions do not point to neither $x$ nor $x'$. 
We claim that either $\ww_2$ or $\ww'_2$ admits a non-critical
preimage at $y_3$.
Grant this claim, and suppose $\ww_3$ is a non-critical direction at $y_3$
mapped to $\ww_1$. 
Then we have a sequence of analytic isomorphisms
\[
B(\ww_3)
\mathop{\rightarrow}\limits^f 
B(\ww_2)
\mathop{\rightarrow}\limits^f 
B(\ww_1)
\mathop{\rightarrow}\limits^f 
B(\ww) \Supset B(\ww_3)
\]
so that we get a rigid period $3$ repelling point in $B(\ww_3)$.

We prove the claim by contradiction. 
If $\deg(Tf(y_3))=2$, then the preimages of 
$\ww_2$ and $\ww'_2$ would be the only critical directions. 
However the direction pointing to $y$ needs to be critical since we assume
that $\cC_f$ is connected. 
The same argument applies when $\deg(Tf(y_3))=3$.

\medskip
 
In the remaining of the proof we suppose   that  $\vv$ is critical for $Tf(x)$, and $\cC_f\cap [x,x']$ is disconnected.
Then no point can have local degree $3$. It follows that
$\cC_f\cap [x,x']=[x,y] \cup \{z\}$, and both maps 
$f|_{[x,z]}$ and $f|_{[z,x']}$ are injective. Observe that the distance is doubled on $[x,y]$
hence it is not possible that $f(z)=z$.

\medskip

\paragraph*{Case 2a}
If $z$ is not in the interior of $[x,f(z)]$, then the tangent direction $\vv'$ at $z$ pointing to $x'$ is mapped
to the direction $\vv$ at $f(z)$ pointing to $x$. The map
$f\colon B(\vv')\to B(\vv)$
is an analytic isomorphism, and  $f$ contains a rigid repelling fixed point in $B(\vv')$. 

\medskip

Suppose that $z$ belongs to the interior of $[x,f(z)]$.
Let $l=d_\bH(x,y)$ and $L=d_\bH(z,x')$. 
Observe that $d_\bH(x,f(z))=L$, and 
\[
L=d_\bH(x,f(z))=2 d_\bH(x,y)+d_\bH(y,z)\]
so that $d_\bH(y,z)=L-2l >0$, and $d_\bH(z,f(z))=l$.
Let $y_*=[z,x']\cap[f(z),x']$, and $\delta=d_\bH(z,y_*)$.
Since $d_\bH(z,f(z))< d_\bH(z,x')$, we have
$y_*\in (z,x')$.
There a preimage $y'_*$ of $y_*$ in the segment
$(x,z)$. 

\medskip

\paragraph*{Case 2b}
Suppose the direction $\vv_*$ at $y_*$
pointing to $x'$ admits a preimage $\vv'_*$ at $y'_*$
which does not point to $x'$.
Since  $d_\bH(f(y_*),f(z))=d_\bH(y_*,z)=\delta\le l$
the point $f(y_*)$ belongs to the segment $[z,f(z)]$, 
and $f^2$ is an analytic isomorphism
on $B(\vv'_*)$ for which
$B(\vv'_*)\Subset f^2B(\vv'_*)$
which yields a rigid period $2$ repelling point.

\medskip

Otherwise, $y'_*=y$ and the only preimage of $\vv_*$
is the critical direction $\vv_c$ at $y$ not pointing to $x$. 
We are thus in the situation where
$x<y<z<f(y)<x'$ all belong the same segment, and 
$[f(y),f(z)]$ is a branch at $f(y)$ different from the ones pointing
to $x$ and $x'$. 
The point $f^2(y)$ belongs to the segment $[f(z),x]$. 

\medskip

\paragraph*{Case 2c}
If  $f^2(y)$ lies in the interior of $[f(y),f(z)]$, then the direction $\vv_*$ at
$f(y)$ pointing to $x'$ is mapped to the direction $\vv_+$ at $f^2(y)$
pointing to $z$ (or $x$ or $x'$ which are all the same).
We get a repelling rigid fixed point in the ball $B(\vv_*)$.

\medskip

\paragraph*{Case 2d}
We can assume that $f^2(y)\in (x,f(y)]$.
The ball $B(\vv_c)$ is mapped as a $2$-to-$1$ cover onto
$B(\vv_*)$, and $B(\vv_*)$ is mapped to the open ball $B$ whose boundary
is $f^2(y)$ and contains $x$. 
Since $d_\bH(f^2(y),f(z))=3l-L<l$, $f^2(y)\in[z,f(z)]$ and $B(\vv_c)\Subset B$.

We have two sub-cases.
Either there exists a rigid repelling period $2$ point in $B(\vv_c)$; 
or there is a type II period $2$ point $w$ in $B(\vv_c)$ having local degree $\ge2$. 
In the latter case, consider the  preimage $y_1$ of $y$ in the segment $[x,y]$. 
Pick one non-critical direction $\vv_d$ at $y_1$ mapped by $Tf(y_1)$
to $\vv_c$. Then $f^3$ maps again $2$-to-$1$ the ball
$B(\vv_d)$ to $B$. If there is no period $3$ rigid repelling point in $B(\vv_d)$, 
then we get a period $3$ type II point $w_1$ having local degree $\ge2$. 
Its image lies in the ball $B(\vv_c)$ and is critical. 
But the critical set of $f$ inside $B(\vv_c)$ is a ray ending at $y$, hence
either $f(w_1)\in (w,y]$ or $w\in (f(w_1),y]$. 
The former case is impossible because otherwise $w$ would be in the Fatou set, 
and the latter case is also impossible because then $f(w_1)$
(hence $w_1$) lies in the Fatou set. 
 
The proof is complete. 
\end{proof}
\begin{figure}[ht]
    \centering
    \includegraphics[width=0.8\textwidth]{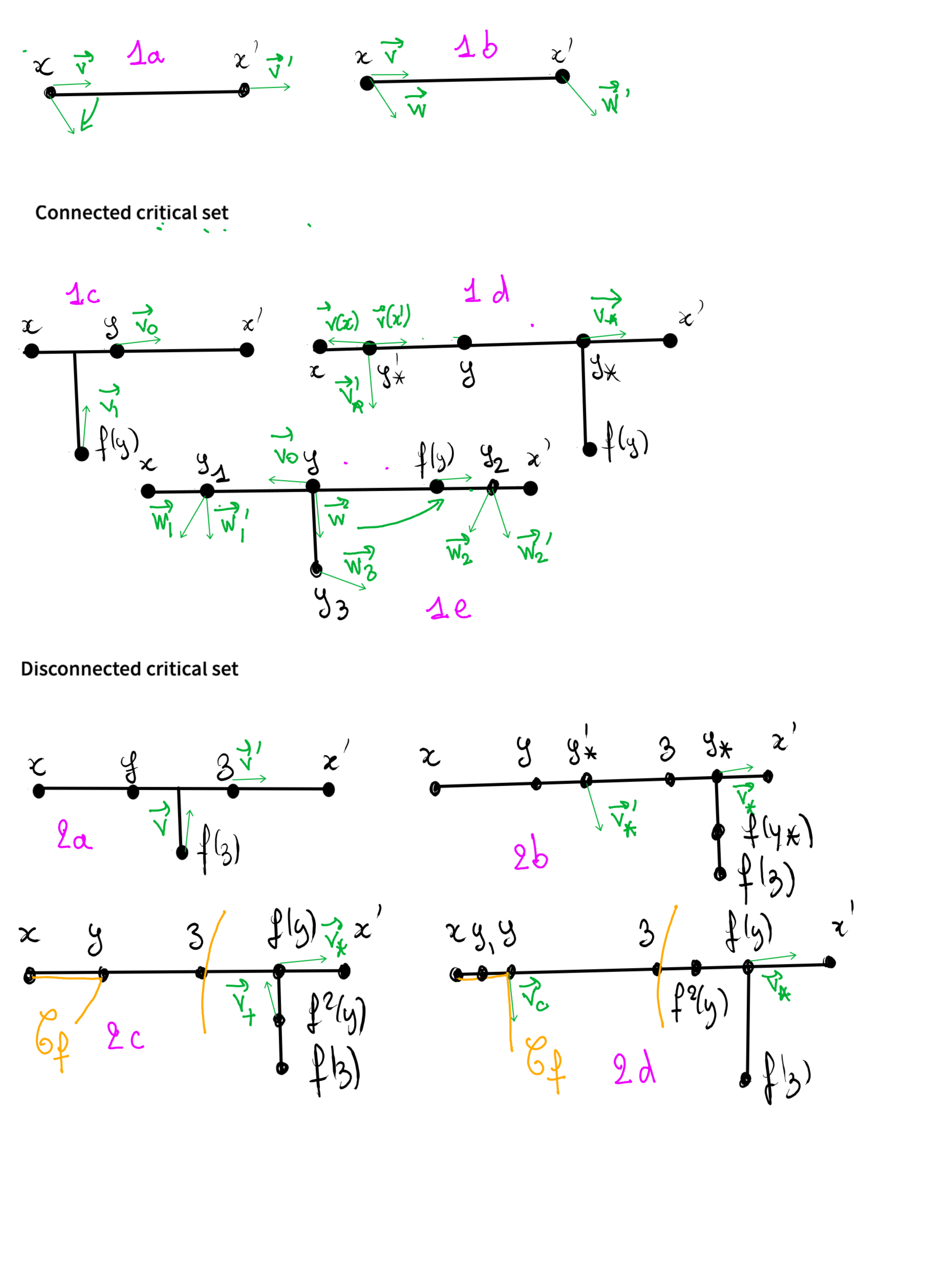}
    \caption{Proof of Theorem~\ref{thm:period3}}
    \label{fig:tikzpgf}
\end{figure}

\subsection{Proof of Theorem~\ref{thm:main2} and Corollary~\ref{cor:embed}}

Let $f_t$ by any meromorphic family parameterized by $t\in\bD$. 
Saying that $f_t$ has no fixed point for which the multiplier is blowing-up at $0$
is equivalent to say that the multiplier function $(z,t)\mapsto df_t(z)$ is holomorphic
on the curve $\{f_t(z)=z,\ |t|\le1/2\}$. 
By Lemma~\ref{lem:multi-na}, this is equivalent to say that no rigid fixed points of $f_{\na}$
is repelling. 

It is proved in~\cite{milnor-quadra} that the map sending a quadratic rational map 
$f$ to the symmetric functions of the multipliers at its three fixed points, 
is a proper biholomorphism onto its image (which turns out to be a plane). 
Statement (1) of Theorem~\ref{thm:main2} thus follows in that case. 

Suppose that $f_t$ is a family of polynomials of degree $d\ge4$, and $f_{\na}$ has no repelling rigid periodic points of period $\le2$ (resp. $1$). Then by Theorem~\ref{thm:period2} 
we conclude that $f_{\na}$ has potential good reduction, hence the family is not degenerating by Theorem~\ref{thm:pot-good}. Statements (2) (resp. (1) for cubic polynomials) follow.
The same argument apply for cubic rational maps with  Theorem~\ref{thm:period3} in place of  Theorem~\ref{thm:period2}. 
The proof of Theorem~\ref{thm:main2} is complete.

\bigskip

Let us prove Corollary~\ref{cor:embed} (1) (the first statement is completely analogous).
Consider the multiplier function $\Lambda\colon \rat_3\to \bA^4\times\bA^3\times \bA^8$ sending the conjugacy class of a 
cubic rational map to the symmetric polynomials of the multipliers at fixed points, period $2$ and period $3$ cycles.
Then $\Lambda$ is a regular algebraic map on $\rat_3$ hence extends to a rational map
$\Lambda\colon\overline{\rat}_3\dasharrow  \bP^4\times\bP^3\times \bP^8$. Fix any proper birational morphism 
$\pi\colon\widehat{\rat}_3\to\overline{\rat}_3$
such that $\pi$ is an isomorphism over $\rat_3$, and $\Lambda$ is a regular function on $\widehat{\rat}_3$. 

Since  $\widehat{\rat}_3$ is projective, then $\Lambda$ is proper, and $Z:=\Lambda(\pa \widehat{\rat}_3)$ is an algebraic
subvariety of $\bP^4\times\bP^3\times \bP^8$. When $Z$ does not intersect $ \bA^4\times\bA^3\times \bA^8$, then 
the valuative criterion of properness applies and proves that $\Lambda\colon \rat_3\to \bA^4\times\bA^3\times \bA^8$ is proper. 
Otherwise, there is a point $p\in \pa\widehat{\rat}_3$ whose image lies in $ \bA^4\times\bA^3\times \bA^8$. 
Choose any curve $C$ containing $p$ but not contained in $\pa\widehat{\rat}_3$. 
Pick any local holomorphic  parametrization $\gamma\colon \bD \to C$ mapping $0$ to $p$. Then choosing a sufficiently large integer $q\in\bN^*$,  one can find a family of rational maps $f_t$ such that its class of conjugacy is equal to $\gamma(t^q)$. 
Then the multiplier function on periodic points of period $\le 3$ is bounded on the family hence 
$f_t$ has potential good reduction by Theorem~\ref{thm:main2}, which is absurd.


\end{document}